\font\Cp = msbm10
\newcommand{\fieldk}{\hbox{{\rm k}}}
\newcommand{\type}{\hbox{\rm deg}}
\newcommand{\LambertW}{\operatorname{W}}
\newcommand{\Rrr}{\hbox{\Cp R}}
\newcommand{\Ccc}{\hbox{\Cp C}}
\newcommand{\qed}{\mbox{$\Box$}\vspace{\baselineskip}}
\newenvironment{proof}{\noindent {\bf Proof:}}{{\qed}}
\newtheorem{theorem}{Theorem}[section]
\newtheorem{proposition}[theorem]{Proposition}
\newtheorem{lemma}[theorem]{Lemma}
\newtheorem{corollary}[theorem]{Corollary}
\newtheorem{conjecture}[theorem]{Conjecture}
\font\Cp = msbm10
\newcommand{\onethingatopanother}[2]{\genfrac{}{}{0pt}{}{#1}{#2}}
\newcommand{\cd}{\onethingatopanother{c+d=t}{c,d \geq 1}}
\begin{document}

\title{Enumerative and asymptotic analysis of a moduli space}
\author{{\sc Margaret A.\ READDY}}

\date{}

\maketitle

\vspace{-10mm}
\begin{center}
To Doron Zeilberger on the occasion of his 60th birthday
\end{center}

\begin{abstract}
We study exact and asymptotic
enumerative 
aspects of
the Hilbert series of
the cohomology ring of the moduli
space of stable pointed curves of genus zero.
This manifold is related to 
the WDVV (Witten-Dijkgraaf-Verlinde-Verlinde)
equations of string theory.

\vspace{.1in}
\noindent
{\footnotesize
Keywords:
asymptotic enumeration,
cohomology of moduli space,
graded Hilbert series,
integral operator,
Lambert W function.
MSC:
primary 05E40,
secondary 05A16.
}


\end{abstract}

\section{Introduction}

The study of moduli spaces has been a source of much research activity in
physics, topology and algebraic topology.
See for example~\cite{Kontsevich_Manin_I,Lee_Vakil}.
The classical example is the
moduli space of smooth $n$-pointed stable curves 
of genus $g$, denoted $\overline{M}_{g,n}$.
This moduli space
was introduced by 
Deligne, Mumford and 
Knudsen~\cite{Deligne_Mumford, Knudsen, Mumford_stability}
and gives a natural compactification
of Mumford's~\cite{Mumford}  moduli space of nonsingular
curves of genus $g$.
To show how rich this family is,
the case
$\overline{M}_{1,n}$ corresponds to
elliptic curves when $n=1$ 
and to abelian varieties
for $n >1$.

In this paper we focus on combinatorial aspects of the
Hilbert series of the cohomology ring of the moduli space
of stable pointed curves of genus zero.
We show its graded
Hilbert series
satisfies an integral operator identity.
This is used
to give asymptotic behavior, 
and in some cases, exact values, of the coefficients
themselves.
We then study the  total
dimension, that is,
the sum of the coefficients of the Hilbert series.
Its asymptotic behavior surprisingly
involves the Lambert W function,
which has applications to classical tree enumeration,
signal processing and fluid mechanics.

\section{Asymptotics and exact formulas for the cohomology ring 
of the moduli space}

\begin{table}
$$
\begin{array}{c c c c c c  c c c  c c c  c     c     r}
&&   &   &   &    &1   &   &     &   & &&&\hspace{20 mm} &     1 \\
&&   &   &   &1   &    &1  &     &   & &&&               &     2 \\
&&   &   &1  &    &5   &   &1    &   & &&&               &     7 \\
&&   &1  &   &16  &    &16 &     &1  & &&&               &    34 \\
&& 1  &   &42 &    &127 &   &42   &   &1 &&&             &   213 \\
& 1 && 99 && 715 && 715 && 99 && 1 &&                    &  1630 \\
1 && 219 && 3292 && 7723 && 3292 && 219 && 1  &          & 14747 \\
\\
\\
&&&&&&\mbox{(a)}&&&&&&&&\mbox{(b)}       
\end{array}
$$
\caption{The coefficients
of the Hilbert series 
of the cohomology ring of the moduli space $\overline{M}_{0,n}$ 
for $n = 0, \ldots, 6$:
(a) The triangular array of the graded
dimensions
$\alpha_{i,j}$ for $0 \leq i+j \leq 6$
and (b) table for 
the total dimension $\sigma_{n}$ for $0 \leq n \leq 6$.}
\label{table_alphas}
\end{table}

We begin by considering exact enumerative data of the Hilbert series
of the cohomology ring of the moduli space $\overline{M}_{0,n}$.
For completeness, 
we give the physicist's definition
of  this cohomology ring
based on Keel's work.
See~\cite[Section 0.10]{Kontsevich_Manin_II}
and~\cite{Keel}.
For $n \geq 3$
and $\fieldk$ a field of characteristic zero,
denote by $P_n$ the set of {\em stable $2$-partitions
of $\{1, \ldots, n\}$},
that is,
the set of all 
unordered partitions of the elements
$\{1, \ldots, n\}$ into two blocks
$\sigma = S_1 | S_2$ with
$|S_i| \geq 2$.
For each $\sigma \in P_n$,
the element $x_{\sigma}$ corresponds to
a cohomology class of 
$H^*(\overline{M}_{0,n})$.
For $\sigma = S_1| S_2$ and
$\tau =  T_1| T_2$ from the index set~$P_n$,
let
$a(\sigma,\tau)$
 be  the number of nonempty pairwise
distinct sets among $S_i \cap T_j$,
where $1 \leq i, j \leq 2$.
Define the ideal~$I_n$ in the
polynomial ring
$\fieldk[x_{\sigma} :  \sigma \in P_n]$
to be generated by the relations:

\vspace*{.2in}
\begin{enumerate}
\item
(linear relations)
For $i,j,k,l$ distinct:
\begin{equation}
\label{equation_linear_relations}
        R_{ijkl}: \:\:\:\:  \sum_{ij \sigma kl} x_{\sigma}
                    -
                    \sum_{kj \tau il} x_{\tau},
\end{equation}
where the summand
$ij \sigma kl$ means to sum over all stable $2$-compositions
$\sigma = S_1 |S_2$ with
the elements
$i, j \in S_1$ and the elements
$k, l \in S_2$.

\item
(quadratic relations)
For each pair $\sigma$ and $\tau$ with $a(\sigma,\tau) = 4$,
\begin{equation}
\label{equation_quadratic_relations}
        x_{\sigma} \cdot x_{\tau}.
\end{equation}
\end{enumerate}
The 
cohomology ring of the moduli space
of $n$-pointed stable curves of genus $0$
is given by
the quotient ring
$H^*(\overline{M}_{0,n}) = \fieldk[x_{\sigma} \: : \: \sigma \in P_n]/I_n$.

See Table~\ref{table_alphas} for some values.
One immediately sees the coefficients
satisfy Poincar\'e duality.

\begin{proposition}
\label{proposition_symmetric}
The coefficients of the Hilbert series of the moduli space
$\overline{M}_{0,n}$ are symmetric.
\end{proposition}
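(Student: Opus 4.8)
The plan is to read off the asserted symmetry directly from Poincar\'e duality on the compact manifold $\overline{M}_{0,n}$, transported to the algebra $\fieldk[x_{\sigma} : \sigma \in P_n]/I_n$ through Keel's identification of this quotient with the cohomology ring. Concretely, within a fixed moduli space the graded dimensions $\alpha_{i,j}$ satisfy $i + j = n - 3$, with $\alpha_{i,j}$ equal to the dimension of the degree-$i$ graded piece; the proposition then amounts to the palindromic identity $\alpha_{i,j} = \alpha_{j,i}$, that is, each row of the array in Table~\ref{table_alphas}(a) reads the same left to right as right to left.

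First I would collect the geometric and algebraic inputs. The space $\overline{M}_{0,n}$ is a smooth, compact, connected (indeed projective) complex variety of complex dimension $n-3$, hence a closed oriented real manifold of dimension $2(n-3)$. By Keel's theorem (cited above) the quotient $\fieldk[x_{\sigma} : \sigma \in P_n]/I_n$ is isomorphic, as a graded ring, to $H^*(\overline{M}_{0,n})$, each generator $x_{\sigma}$ being a cohomology class of degree $2$; since the ring is generated in degree $2$, the cohomology is concentrated in even degrees. Thus the degree-$i$ piece of the quotient is exactly $H^{2i}(\overline{M}_{0,n})$, and $\alpha_{i,j} = \dim_{\fieldk} H^{2i}(\overline{M}_{0,n})$ with $j = (n-3) - i$.

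Next I would invoke Poincar\'e duality for the closed oriented manifold $\overline{M}_{0,n}$: the cup-product pairing $H^{k} \times H^{2(n-3)-k} \to H^{2(n-3)} \cong \fieldk$ is perfect, so $\dim_{\fieldk} H^{k} = \dim_{\fieldk} H^{2(n-3)-k}$ for every $k$. Specializing to $k = 2i$ gives $\dim_{\fieldk} H^{2i} = \dim_{\fieldk} H^{2((n-3)-i)} = \dim_{\fieldk} H^{2j}$, which is precisely $\alpha_{i,j} = \alpha_{j,i}$.

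The only content beyond quoting Poincar\'e duality is the grading bookkeeping --- that the algebraic degree in the quotient equals half the cohomological degree, with no odd cohomology to spoil the correspondence --- and this is exactly what Keel's degree-$2$ generation supplies. If instead one wanted a self-contained proof from the presentation alone, the substantive step, and the main obstacle, would be to verify directly that $\fieldk[x_{\sigma} : \sigma \in P_n]/I_n$ is an Artinian Gorenstein (Poincar\'e duality) algebra: one would need to exhibit a one-dimensional socle in top degree $n-3$ and show that multiplication induces a perfect pairing between the degree-$i$ and degree-$(n-3-i)$ components, after which the symmetry of the Hilbert function is formal. Because the underlying manifold already carries this duality, I would favor the geometric route and regard the Gorenstein verification as an optional alternative.
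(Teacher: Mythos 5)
Your argument is the same one the paper uses: the paper simply cites that $\overline{M}_{g,n}$ is a smooth, complete, compact variety and refers to Keel for the genus-zero case, i.e., the symmetry is Poincar\'e duality read off through Keel's identification of the quotient ring with $H^*(\overline{M}_{0,n})$. Your write-up supplies the bookkeeping (even-degree generation, algebraic degree equals half the cohomological degree) that the paper leaves implicit, and is correct.
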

This follows from the fact that
the genus $g$ moduli space
$\overline{M}_{g,n}$
is a smooth, complete and compact 
variety~\cite{Deligne_Mumford, Knudsen, Mumford_stability}.
For a proof in  the genus $0$ case, see~\cite{Keel}.
By Proposition~\ref{proposition_symmetric}
we thus can
write the Hilbert series as
$$
{\mathcal H}(H^*(\overline{M}_{0,n})) = \sum_{(i,j)} \alpha_{i,j} t^i,
$$
where the sum is over all pairs $(i,j)$
of nonnegative
integers with $i + j = n$.

We next present a recursion for the coefficients of the
Hilbert series of the 
cohomology ring of the moduli
space $\overline{M}_{0,n}$.
The recursion we give here is a symmetrized
version of Keel's~\cite[page 550]{Keel}.
\begin{theorem}
The coefficients of the Hilbert series
series of cohomology ring of the moduli space~$\overline{M}_{0,n}$ 
satisfy the recursion:
\begin{equation}
\label{equation_alphas}
	\alpha_{i+1,j+1} = \alpha_{i+1,j} + \alpha_{i,j+1}
			+
			\frac{1}{2} \sum_{p=0}^i \sum_{q=0}^j
			{i+j + 4 \choose p+q +2} \alpha_{p,q} \cdot
                                                 \alpha_{i-p,j-q},
\end{equation}
where $i, j \geq 0$ and
with the initial conditions $\alpha_{i,0} = \alpha_{0,i} = 1$
for $i \geq 0$.
\end{theorem}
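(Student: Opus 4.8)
The plan is to obtain~\eqref{equation_alphas} from Keel's recursion~\cite[p.~550]{Keel} by rewriting it in a form that is manifestly invariant under the interchange $(i,j)\leftrightarrow(j,i)$, the interchange being legitimate by Poincar\'e duality (Proposition~\ref{proposition_symmetric}). The first step is to fix the dictionary $\alpha_{i,j}=\dim_{\fieldk}H^{2i}(\overline{M}_{0,i+j+3})$, under which $i$ records the cohomological degree and $j$ its complementary degree, so that Proposition~\ref{proposition_symmetric} reads $\alpha_{i,j}=\alpha_{j,i}$. With this dictionary the left-hand side $\alpha_{i+1,j+1}$ is a Betti number of $\overline{M}_{0,i+j+5}$, the two linear terms are Betti numbers of $\overline{M}_{0,i+j+4}$, and each quadratic term $\alpha_{p,q}\alpha_{i-p,j-q}$ is a product of Betti numbers of the smaller spaces $\overline{M}_{0,p+q+3}$ and $\overline{M}_{0,i+j-p-q+3}$.

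Next I would recall Keel's recursion in this notation. It reflects his blow-up description of the moduli space, in which the Poincar\'e polynomial $g_m(q)=\sum_i\alpha_{i,m-i}\,q^i$ of the $m$-dimensional space $\overline{M}_{0,m+3}$ satisfies a relation of the shape $g_m(q)=(1+q)\,g_{m-1}(q)+(\text{quadratic correction})$. Extracting the coefficient of $q^{i+1}$ turns the factor $(1+q)$ into precisely the linear contribution $\alpha_{i+1,j}+\alpha_{i,j+1}$, while the quadratic correction is a sum, indexed by the boundary divisors $D_S\cong\overline{M}_{0,A}\times\overline{M}_{0,B}$ of the base, of products $P(\overline{M}_{0,A})\,P(\overline{M}_{0,B})$ carrying one extra power of $q$ from the codimension-two centres. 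Counting the ways of distributing the $i+j+4$ marked points across the two factors with $A=p+q+3$ and $B=i+j-p-q+3$ produces the weight $\binom{i+j+4}{p+q+2}$.

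The heart of the argument is then the symmetrization. Keel's quadratic sum runs over \emph{unordered} boundary divisors, whereas the double sum $\sum_{p=0}^i\sum_{q=0}^j$ in~\eqref{equation_alphas} runs over the ordered splittings $(p,q)$ and $(i-p,j-q)$; since the summand $\binom{i+j+4}{p+q+2}\alpha_{p,q}\alpha_{i-p,j-q}$ is invariant under the involution $(p,q)\mapsto(i-p,j-q)$ --- using $\binom{m}{r}=\binom{m}{m-r}$ together with commutativity of the two factors --- passing to the ordered sum doubles each contribution, which is exactly the origin of the prefactor $\tfrac12$. Here Proposition~\ref{proposition_symmetric} is what allows the resulting expression to be written symmetrically in $i$ and $j$. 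I expect the main obstacle to be the bookkeeping of the binomial weights: one must check that Keel's distribution coefficients, after the change of indexing dictated by $\alpha_{p,q}=\dim H^{2p}(\overline{M}_{0,p+q+3})$ and after summing over mirror strata, consolidate (by a Chu--Vandermonde identity) into the single coefficient $\binom{i+j+4}{p+q+2}$, with all degree shifts kept consistent. Finally I would verify the initial data: $\alpha_{0,i}=\dim_{\fieldk}H^0(\overline{M}_{0,i+3})=1$ since each $\overline{M}_{0,n}$ is connected, and $\alpha_{i,0}=\dim_{\fieldk}H^{2i}(\overline{M}_{0,i+3})=1$ as the top-degree class, the two being interchanged by Proposition~\ref{proposition_symmetric} and compatible with the symmetric form of~\eqref{equation_alphas}.
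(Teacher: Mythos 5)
The first thing to say is that the paper does not actually prove this theorem: it states that the recursion is a symmetrized version of Keel's and defers to \cite[page 550]{Keel}, so there is no in-paper argument to compare yours against. Your proposal supplies the derivation that the citation leaves implicit, and its skeleton is correct: the dictionary $\alpha_{i,j}=\dim_{\fieldk} H^{2i}(\overline{M}_{0,i+j+3})$ matches the paper's indexing (the row $1,5,1$ of Table~\ref{table_alphas} sits at $i+j=2$ and belongs to $\overline{M}_{0,5}$); the factor $(1+q)$ coming from the fibration of $\overline{M}_{0,i+j+5}$ over $\overline{M}_{0,i+j+4}\times\mathbb{P}^1$ gives exactly the two linear terms $\alpha_{i+1,j}+\alpha_{i,j+1}$ upon extracting $[q^{i+1}]$; each codimension-two blow-up centre, isomorphic to a boundary stratum $\overline{M}_{0,p+q+3}\times\overline{M}_{0,i+j-p-q+3}$ of the base, contributes its Poincar\'e polynomial shifted by one power of $q$, which is the quadratic sum; and the prefactor $\tfrac12$ converts the count of subsets $S$ of the $i+j+4$ points with $|S|=p+q+2$ (of which there are $\binom{i+j+4}{p+q+2}$) into a count of unordered partitions, the summand being invariant under $(p,q)\mapsto(i-p,j-q)$. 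Two refinements: the Chu--Vandermonde consolidation you anticipate is not needed, since the weight $\binom{i+j+4}{p+q+2}$ falls out directly from choosing which marked points lie on each factor; and the one step that genuinely must be checked against Keel's paper is the index set of his blow-up centres, which in his inductive construction are labelled by certain distinguished subsets rather than by all unordered partitions --- this is exactly where the $S_n$-symmetry and Proposition~\ref{proposition_symmetric} are invoked to pass to the manifestly symmetric form~(\ref{equation_alphas}). A quick sanity check worth adding is $i=j=0$, which gives $\alpha_{1,1}=1+1+\tfrac12\binom{4}{2}=5$ in agreement with Table~\ref{table_alphas}.
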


Define the exponential generating function
$$
	f_j(x) = \sum_{i \geq 0} \alpha_{i,j} \frac{x^{i+2}}{(i+2)!}.
$$
These are series formed by taking diagonal entries in the
$\alpha_{i,j}$ triangle
with the power of $t$ slightly shifted.
See Table~\ref{table_f_jays}
for the first few values of
$f_j(x)$.
Let
$I$ be the integral operator
$$
     I(f(x)) = \int_0^x f(t) \:\: dt
$$
and
$I^j$ denote the $j$th iterated integral operator.
The following integral operator identity holds
among these series.

\begin{table}
\label{table_f_jays}
\begin{eqnarray*}
f_{0}(x)
  & = &
       e^{x} - x - 1  \\
f_{1}(x)
  & = &
        2  e^{2x} - \left( \frac{1}{2}  x^2 + 2  x + 2 \right)  e^{x} \\
f_{2}(x)
  & = &
        \frac{27}{2}  e^{3x} 
      - \left(   4x^2 + 20x + 22  \right)  e^{2x}
      + \left( \frac{1}{8}x^4 + \frac{11}{6}x^3 + 
\frac{17}{2}x^2 + 15x + \frac{17}{2}\right)  e^{x} \\
f_{3}(x)
  & = &
        \frac{512}{3}  e^{4x} 
      - \left( \frac{243}{4}x^2 + 324x + 378  \right)  e^{3x}
      + \left( 4x^4 + \frac{160}{3}x^3 + 240x^2 + 432x + 262  \right)  e^{2x} \\
  &   &
      - \left(  \frac{1}{48}x^6 + \frac{2}{3}x^5 + \frac{185}{24}x^4 + 41x^3
               + \frac{423}{4}x^2 + 126x + \frac{164}{3}\right)  e^{x}
\end{eqnarray*}
\caption{Expressions for $f_j(x)$, $j= 0, \ldots, 3$.}
\end{table}

\begin{lemma}
\label{lemma_theodore}
The generating function $f_j(x)$
satisfies the integral operator identity
\begin{equation}
\label{equation_theodore}
	I^{j+1}(f_{j+1}(x)) =
	I^{j+2}(f_{j+1}(x))  + I^{j+1}(f_j(x))
	+
	\frac{1}{2}
	\sum_{q=0}^j I^q(f_q(x)) \cdot I^{j-q}(f_{j-q}(x)),
\end{equation}
where
$f_0$ is given by $f_0 = e^x - x - 1$.
\end{lemma}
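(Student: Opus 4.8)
The plan is to convert the integral–operator identity~(\ref{equation_theodore}) into a statement about the coefficients $\alpha_{i,j}$ and to recognize it as the recursion~(\ref{equation_alphas}) established in the preceding theorem. The only analytic ingredient needed is the action of the iterated integral on the series $f_j$. Integrating $f_j(x)=\sum_{i\ge 0}\alpha_{i,j}\,x^{i+2}/(i+2)!$ termwise $q$ times gives
\[
   I^q\bigl(f_j(x)\bigr)=\sum_{i\ge 0}\alpha_{i,j}\,\frac{x^{\,i+q+2}}{(i+q+2)!},
\]
so each application of $I$ raises both the exponent of $x$ and the denominator factorial by one. (In particular, the initial condition $\alpha_{i,0}=1$ gives $f_0=\sum_{i\ge 0}x^{i+2}/(i+2)! = e^x-x-1$, confirming the stated value of $f_0$.) Substituting this formula into the four terms of~(\ref{equation_theodore}) turns the identity into an equality of explicit power series, which I would then verify coefficient by coefficient.

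For the comparison I would fix $M$ and read off the coefficient of $x^M$ on each side. The three linear terms $I^{j+1}(f_{j+1})$, $I^{j+2}(f_{j+1})$ and $I^{j+1}(f_j)$ contribute $\alpha_{M-j-3,\,j+1}/M!$, $\alpha_{M-j-4,\,j+1}/M!$ and $\alpha_{M-j-3,\,j}/M!$, respectively. For the quadratic term I would expand each product $I^q(f_q)\cdot I^{j-q}(f_{j-q})$ as a Cauchy product; its coefficient of $x^M$ is
\[
   \frac{1}{2}\sum_{q=0}^{j}\sum_{p}
   \frac{\alpha_{p,q}\,\alpha_{M-j-4-p,\,j-q}}{(p+q+2)!\,(M-p-q-2)!}.
\]
Writing $M=i+j+4$, multiplying the whole coefficient identity through by $M!$, and using
\[
   \frac{(i+j+4)!}{(p+q+2)!\,(i+j+2-p-q)!}=\binom{i+j+4}{p+q+2}
\]
then collapses the displayed equality into exactly the recursion~(\ref{equation_alphas}), with the convolution ranges $0\le p\le i$ and $0\le q\le j$ appearing automatically from the non-negativity of the shifted indices.

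It remains to dispose of the low-order powers, where the substitution $M=i+j+4$ would force a negative index. For $M=j+3$ (that is, $i=-1$) the term $I^{j+2}(f_{j+1})$ and the quadratic sum both begin only at $x^{j+4}$ and so contribute nothing, and the identity reduces to $\alpha_{0,j+1}=\alpha_{0,j}$, i.e.\ $1=1$, by the initial conditions; for $M<j+3$ every coefficient vanishes. Since the recursion~(\ref{equation_alphas}) together with the initial conditions $\alpha_{i,0}=\alpha_{0,i}=1$ holds by the preceding theorem, all coefficients of the two sides of~(\ref{equation_theodore}) agree, proving the identity. The step demanding the most care is the quadratic term: one must check that clearing the factorial denominators by $M!$ produces precisely the binomial coefficient $\binom{i+j+4}{p+q+2}$ and that the index shifts line up so that the convolution $\alpha_{p,q}\,\alpha_{i-p,j-q}$ emerges with the correct summation ranges.
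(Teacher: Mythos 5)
Your proposal is correct and is essentially the paper's own argument run in the opposite direction: the paper multiplies the recursion~(\ref{equation_alphas}) by $x^{i+j+4}/(i+j+4)!$ and sums to build up the four terms of~(\ref{equation_theodore}), while you extract the coefficient of $x^M$ from~(\ref{equation_theodore}) and recognize the recursion, using the same key facts (the action of $I^q$ on the series and the collapse of the factorial denominators into $\binom{i+j+4}{p+q+2}$). Your explicit treatment of the boundary power $x^{j+3}$, where the identity reduces to $\alpha_{0,j+1}=\alpha_{0,j}=1$, corresponds exactly to the paper's cancellation of the $x^{j+3}/(j+3)!$ terms on the two sides.
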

\begin{proof}
Multiply the identity~(\ref{equation_alphas})
with $x^{i+j+4}/(i+j+4)!$ and sum over
$i \geq 0$ to give
\begin{eqnarray}
\label{equation_first}
	\sum_{i \geq 0} 
	\frac{\alpha_{i+1,j+1}}{(i+j+4)!}  \cdot x^{i+j+4}
	& = &
	\sum_{i \geq 0}
	\frac{\alpha_{i,j+1}}{(i+j+4)!} \cdot x^{i+j+4}
	+
	\sum_{i \geq 0}
	\frac{\alpha_{i+1,j}}{(i+j+4)!} \cdot x^{i+j+4}
	\\
\label{equation_second}
	& + & 
	\frac{1}{2}
	\sum_{i \geq 0}
	\sum_{p=0}^i \sum_{q=0}^j 
	\frac{\alpha_{p,q}}{(p+q+2)!} \cdot 
	\frac{\alpha_{i-p,j-q}}{(i-p+j-q+2)!} \cdot x^{i+j+4}
\end{eqnarray}
The left-hand side of~(\ref{equation_first})
can be written as
\begin{eqnarray*}
	\sum_{i \geq 0} 
	\frac{\alpha_{i+1,j+1}}{(i+j+4)!}  \cdot x^{i+j+4}
	& = &
	\sum_{i \geq 0}
	\alpha_{i,j+1} \cdot \frac{x^{i+j+3}}{(i+j+3)!}
  	- 
	\alpha_{0,j+1} \cdot \frac{x^{j+3}}{(j+3)!}\\
	& = &
	I^{j+1}(f_{j+1}(x)) - \frac{x^{j+3}}{(j+3)!}.
\end{eqnarray*}
In a similar manner,
the right-hand side of~(\ref{equation_first})
becomes
\begin{eqnarray*}
	\sum_{i \geq 0}
	(\alpha_{i,j+1} + \alpha_{i+1,j})
	\cdot
	\frac{x^{i+j+4}}{(i+j+4)!}
	& = &
	I^{j+2}(f_{j+1}(x))
	+
	I^{j+1}(f_j(x)) - \frac{x^{j+3}}{(j+3)!}.
\end{eqnarray*}
Finally, the triple sum in~(\ref{equation_second})
can be written as
\begin{eqnarray*}
	\sum_{q = 0}^j
	\left(
	\sum_{p \geq 0}
	\alpha_{p,q} \cdot
	\frac{x^{p+q+2}}{(p+q+2)!}
	\right) 
	\cdot 
	\left(
	\sum_{p \geq 0}
	\alpha_{p,j-q} \cdot
	\frac{x^{p+j-q+2}}{(p+j-q+2)!}
	\right) 
	& = &
	\sum_{q = 0}^j
	I^q(f_q(x)) \cdot I^{j-q}(f_{j-q}(x)).	
\end{eqnarray*}	
The result now follows by combining these identities.
\end{proof}

Notice the degrees of the polynomials preceding the
$e^{kx}$ terms in $f_j(x)$ 
depend only on $j$ and~$k$.  This motivates one to define
the {\em degree sequence} 
for functions having the expansion
$f(x) = \sum_{j=0}^k p_j(x) \cdot e^{jx}$
where the
$p_j(x)$ are polynomials in $x$
by 
$
	\deg(f) = (\deg(p_j(x)))_{j=0}^k.
$
Here we define 
the degree of the zero polynomial 
to be
$\deg(0) = - \infty$.

It is straightforward to verify the following proposition.

\begin{proposition}
\label{proposition_operations}
Let $f$ and $g$ have series expansions
$f = \sum_{j=0}^k p_j(x) \cdot e^{jx}$
and
$g = \sum_{j=0}^m q_j(x) \cdot e^{jx}$
with
$\type(f) = (a_0, a_1, \ldots, a_k)$ and
$\type(g) = (b_0, b_1, \ldots, b_m)$.
Then

\vspace*{-2mm}
\begin{enumerate}
\item
[(a)]
$
	\type(f+g) \leq (c_0, \ldots, c_n)
$,
where
$c_i = \max(a_i,b_i)$
and
$n = \max(k,m)$.

\item
[(b)]
$
	\type(f \cdot g) \leq (d_0, \ldots, d_{k+m}),
$
where
${\displaystyle d_i = \max_{p+q = i}(a_p + b_q)}$.

\item
[(c)]
$
	\type(\frac{d f}{dx}) = (a_0 - 1, a_1, \ldots, a_k)
$
with the convention $0 - 1 = - \infty$.

\item
[(d)]
If $f$ is non-zero 
then
$$
	\type(I(f)) = (b_0, a_1, \ldots, a_k)
$$
with 
$b_0 = \max(a_0 + 1, 0)$.

\item
[(e)]
When $f \equiv 0$, that is,
$\type(f) = (-\infty, -\infty, \ldots,  -\infty)$
then
$$
	\type(I(f)) = (0, -\infty, \ldots,  -\infty).
$$

\item
[(f)]
The solution $y$ to the differential equation
$$
	y' = y + f(x)
$$ 
has
degree sequence
$$
	\type(y) = (a_0, \max(a_1+1,0), a_2, \ldots, a_k).
$$

\end{enumerate}
\end{proposition}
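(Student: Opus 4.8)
The plan is to reduce every one of the six rules to a single computation on a ``monomial'' $x^n e^{jx}$ and then invoke linearity, since in each slot of the degree sequence the contributions coming from distinct exponential frequencies $e^{ix}$ never mix. The two facts I would establish first are the derivative rule $\frac{d}{dx}\left(x^n e^{jx}\right) = \left(j x^n + n x^{n-1}\right) e^{jx}$, and the antiderivative rule: for $j \geq 1$, integration by parts (equivalently, solving $q' + j q = p$ degree by degree) shows $\int_0^x p(t) e^{jt}\, dt = q(x) e^{jx} - q(0)$ for a polynomial $q$ with $\type(q) = \type(p)$ and leading coefficient $\mathrm{lc}(p)/j \neq 0$, whereas for $j = 0$ one simply has $\int_0^x p(t)\, dt$ of degree $\type(p) + 1$.

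Parts (a) and (b) are then immediate: the coefficient of $e^{ix}$ in $f+g$ is $p_i + q_i$, of degree at most $\max(a_i, b_i)$, and in $fg$ it is $\sum_{p+q=i} p_p q_q$, of degree at most $\max_{p+q=i}(a_p + b_q)$; the inequalities (rather than equalities) record the possibility that leading terms cancel. For part (c) I would apply the derivative rule slot by slot: for $j \geq 1$ the polynomial factor becomes $p_j' + j p_j$, whose leading term $j\cdot\mathrm{lc}(p_j)\,x^{a_j}$ cannot vanish in characteristic zero, so that degree is preserved exactly, while the $j = 0$ slot loses one degree. Since the slots do not interact this is a genuine equality, justifying the ``$=$''.

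For parts (d) and (e) I would work slot by slot with the antiderivative rule. Each slot $j \geq 1$ contributes a term $q_j(x) e^{jx}$ of degree exactly $a_j$ together with a boundary constant $-q_j(0)$ deposited into slot $0$, while the original slot $0$ contributes $\int_0^x p_0$, of degree $a_0 + 1$. Hence slots $1, \ldots, k$ of $I(f)$ keep their degrees $a_1, \ldots, a_k$, and slot $0$ has degree at most $\max(a_0+1, 0)$. Part (f) has the same shape after the integrating factor $e^{-x}$: writing $y = e^x \int_0^x e^{-t} f(t)\, dt + C e^x$ and expanding $e^{-t} f = \sum_j p_j(t)\, e^{(j-1)t}$, the frequency $j=1$ is the resonant one whose primitive gains a degree and, after multiplying back by $e^x$, lands in slot $1$; every other frequency keeps its degree, with all boundary constants and the free constant $C$ accumulating in slot $1$. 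This yields the claimed sequence $(a_0, \max(a_1+1,0), a_2, \ldots, a_k)$.

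The main obstacle, and the only delicate point, is the resonant slot: slot $0$ for $I$ and slot $1$ for the ODE. There the stated degree is an upper bound that is attained generically but can drop if the accumulated constant cancels; for instance $I(e^x - 2 e^{2x}) = e^x - e^{2x}$ has vanishing slot-$0$ part even though $\max(a_0+1,0) = 0$. Thus the ``$=$'' in (d) should be read as the generic value and as an upper bound in all cases, and the degenerate statement (e) merely records the formal output $(0,-\infty,\ldots,-\infty)$ of the rule on the all-$-\infty$ sequence. In the intended application to the functions $f_j(x)$ the relevant boundary constant, respectively the constant $C$, is nonzero, so the resonant slot does achieve the asserted degree; I would therefore close by noting that the equality in (c) is unconditional, those in (d)--(f) hold away from the single resonant-slot cancellation, and in every case the right-hand sequence is a valid bound for $\type$.
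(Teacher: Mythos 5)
Your proposal is correct, and in fact it supplies more than the paper does: the paper gives no argument at all for this proposition, dismissing it with ``It is straightforward to verify,'' so the slot-by-slot verification you give (reduce to monomials $x^n e^{jx}$, track the derivative and antiderivative of each exponential frequency separately, and observe that distinct frequencies never interact) is exactly the intended routine check. The one substantive thing you add is the observation that the equalities asserted in (d)--(f) can fail in the resonant slot when the accumulated boundary constants cancel; your example $I(e^x - 2e^{2x}) = e^x - e^{2x}$, whose slot-$0$ part vanishes even though the rule predicts degree $\max(a_0+1,0)=0$, is a genuine counterexample to the literal statement of (d), and the same caveat applies to the formal rule (e) (since $I(0)=0$ has all-$-\infty$ degree sequence) and to slot $1$ in (f). This does not damage the paper: Proposition~\ref{proposition_ocean} invokes these rules only to propagate \emph{upper bounds} on degree sequences (its conclusion is an inequality, and sharpness is established separately in Theorem~\ref{theorem_theodore} by exhibiting the non-zero leading coefficients $\gamma_{s,t}$), and your upper-bound reading of (d)--(f) is precisely what those applications require. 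So your write-up is a faithful and slightly more careful version of the omitted proof rather than a different approach.
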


The degree sequence properties are now used
to derive an upper bound for the degree sequences
of the series $f_j$.

\begin{proposition}
\label{proposition_ocean}
For $j \geq 1$,
the series
$f_j(x)$ has  the expansion
$$
	f_j(x) = \sum_{k=1}^{j+1} p_{j,k}(x) e^{kx},
$$
where
$p_{j,k}(x)$ is a polynomial of degree at most
$2(j-k+1)$,
that is,
$$
     \type(f_j(x)) \leq (-\infty, 2j, 2(j-1), \ldots, 2, 0).
$$
\end{proposition}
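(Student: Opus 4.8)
The plan is to induct on $j$, combining the recursion of Lemma~\ref{lemma_theodore} with the degree-sequence calculus of Proposition~\ref{proposition_operations}. The base case $j=1$ is read directly from Table~\ref{table_f_jays}: since $f_1(x) = 2e^{2x} - (\frac12 x^2 + 2x + 2)e^x$, we have $\type(f_1) = (-\infty, 2, 0)$, matching the claim. For the inductive step I assume $\type(f_q) \le (-\infty, 2q, 2(q-1), \ldots, 2, 0)$ for all $1 \le q \le j$, recalling also that $f_0 = e^x - x - 1$ is given explicitly, and deduce the bound for $f_{j+1}$.

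First I would isolate $f_{j+1}$ in~(\ref{equation_theodore}). Writing $g = I^{j+1}(f_{j+1})$, so that $I^{j+2}(f_{j+1}) = I(g)$, the identity reads $g - I(g) = \Phi$, where $\Phi = I^{j+1}(f_j) + \frac12 \sum_{q=0}^j I^q(f_q)\cdot I^{j-q}(f_{j-q})$ collects the terms free of $f_{j+1}$. Differentiating and using $\frac{d}{dx}I(g) = g$ converts this into the differential equation $g' = g + \Phi'$, which is precisely the situation governed by Proposition~\ref{proposition_operations}(f). The problem thus reduces to computing $\type(\Phi)$, reading off $\type(g)$ from part (f), and recovering $f_{j+1} = \frac{d^{j+1}}{dx^{j+1}}g$ via $j+1$ applications of part (c).

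The substantive computation, and the main obstacle, is bounding $\type(\Phi)$, in particular the convolution sum. Setting $h_q = I^q(f_q)$, the inductive hypothesis together with the integration rules (d),(e) gives $\type(h_q) \le (q-1, 2q, 2(q-1), \ldots, 2, 0)$ for $q \ge 1$, while $h_0 = f_0$ has the exceptional sequence $\type(h_0) = (1,0)$. Applying the product rule (b) to $h_q \cdot h_{j-q}$, the linear tails (splits with both indices $\ge 1$) always combine to the value $2j - 2i + 4$ in slot $i$, independent of the split and of $q$, which yields the uniform bound $2(j-i+2)$ in slots $2 \le i \le j+2$. The delicate point is slots $0$ and $1$, where the dip at index $0$ of each factor matters; a short case check shows the terms $q=0$ and $q=j$ dominate, contributing $j$ in slot $0$ and, crucially, $2j+1$ in slot $1$ (from pairing the constant part of $h_0$ with the $e^{jx}$-coefficient of $h_j$). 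Taking the entrywise maximum over $q$ via part (a) and combining with $\type(I^{j+1}(f_j)) \le (j, 2j, 2(j-1), \ldots, 2, 0)$ gives $\type(\Phi) \le (j, 2j+1, 2j, 2(j-1), \ldots, 2, 0)$.

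Finally I would run the reduction backwards. By part (c), $\type(\Phi') \le (j-1, 2j+1, 2j, \ldots, 2, 0)$; by part (f) the resonant $e^x$-slot is boosted by one, so $\type(g) \le (j-1, 2j+2, 2j, \ldots, 2, 0)$ — and $2j+2 = 2(j+1)$ is exactly where the leading degree of the bound originates, showing why the $q=0$ term is essential. Differentiating $j+1$ times only lowers the index-$0$ entry, and starting from $j-1$ it reaches $-\infty$ before the differentiations are exhausted, leaving $\type(f_{j+1}) \le (-\infty, 2(j+1), 2j, \ldots, 2, 0)$, which closes the induction. The only supplementary care needed is to confirm that $f_{j+1}$ genuinely has the form $\sum_k p_k(x)e^{kx}$ so the calculus applies; this is immediate, since this class is closed under $I$, $\frac{d}{dx}$, sums, and products, which are the only operations used.
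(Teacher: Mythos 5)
Your argument is correct and is essentially the paper's proof: induction on $j$ via Lemma~\ref{lemma_theodore}, bounding the forcing term by the degree-sequence calculus of Proposition~\ref{proposition_operations} to get $(j,\,2j+1,\,2j,\,2(j-1),\ldots,2,0)$ (the same bound the paper obtains from its items (i)--(iii)) and then invoking part (f); the only cosmetic difference is that the paper applies $d^{j+2}/dx^{j+2}$ to the identity before solving the resulting first-order equation for $f_{j+1}$ directly, whereas you solve for $g=I^{j+1}(f_{j+1})$ first and differentiate $j+1$ times at the end. One small slip in a parenthetical: the $2j+1$ in slot $1$ comes from pairing the degree-one polynomial ($e^{0\cdot x}$) part of $h_0$ with the $e^{x}$-coefficient of $h_j$ (of degree $2j$), not with its $e^{jx}$-coefficient, but this does not affect the bound or the conclusion.
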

\begin{proof}
It is straightforward to verify
that
$f_0 = e^x -1 -x$
and
$f_1 = 2 e^x - (\frac{1}{2} x^2 + 2x + 2) e^x$.
Hence
$\type(f_0) = (1,0)$
and
$\type(f_1) = (-\infty, 2,0)$.

The proof is by induction on $j$, 
and we have just verified the induction basis $j=1$.
Apply the differential operator
$\frac{d^{j+2}}{dx^{j+2}}$ to
equation~(\ref{equation_theodore}) to obtain
\begin{equation}
\label{equation_hello}
	\frac{d}{dx}(f_{j+1}(x))
	=
	f_{j+1}(x) 
	+
	\frac{d^{j+2}}{dx^{j+2}}
	\left(
		I^{j+1}(f_j(x)) + \frac{1}{2}
		\sum_{q=0}^j I^q(f_q(x)) \cdot
		             I^{j-q}(f_{j-q}(x))
	\right).
\end{equation}
Assume that 
$\type(f_k) \leq (-\infty, 2k, 2k-2, \ldots, 2, 0)$ for all
$1 \leq k \leq j$.
By Proposition~\ref{proposition_operations} we have
\begin{enumerate}
\item
[(i)]
$\type(I^{j+1}(f_j)) \leq (j, 2j, 2j-2, \ldots, 2, 0)$,

\item
[(ii)]
$\type(I^q(f_q) \cdot I^{j-q}(f_{j-q})) \leq 
(j-2, j+ \max(q,j-q) -1, 2j, 2j-2, \ldots, 2, 0)$
for $1 \leq q \leq j-1$,

\item
[(iii)]
$\type(I^j(f_j) \cdot f_0) \leq 
(j, 2j+1, 2j, 2j-2, \ldots,  2, 0)$.

\end{enumerate}
Adding these terms together, 
we obtain the
upper bound
for the
corresponding degree sequence to be
$(j, 2j+1, 2j, 2j-2, \ldots, 2, 0)$.
Applying the operator
$d^{j+2}/dx^{j+2}$, we have the upper bound
$(-\infty, 2j+1, 2j, 2j-2, \ldots, 2, 0)$ for the rightmost term 
in the differential equation~(\ref{equation_hello}).
Hence by Proposition~\ref{proposition_operations} (f),
the degree sequence of $f_{j+1}$ has the desired form.
\end{proof}

In order to prove the next two theorems, we will
need a lemma.

\begin{lemma}
The following two identities hold for $t \geq 1$:
\begin{eqnarray}
\label{equation_c_minus_one}
\sum_{\cd}
             {t \choose c,d}
             c^{c-1} \cdot
             d^{d-1}
  & = &
  2 \cdot (t-1) \cdot t^{t-2} 
\\
\label{equation_c}
\sum_{\cd}
             {t \choose c,d}
             c^{c} \cdot
             d^{d-1}
  & = &
  (t-1) \cdot t^{t-1} 
\end{eqnarray}
\label{lemma_Abel}
\end{lemma}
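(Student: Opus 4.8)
The plan is to interpret both sums as coefficients of products of exponential generating functions built from the \emph{tree function} $T(x) = \sum_{n \geq 1} n^{n-1} x^n/n!$, the EGF for rooted labelled trees, which satisfies the functional equation $T = x e^{T}$ (equivalently $T(x) = -\LambertW(-x)$, tying these identities to the Lambert W function appearing elsewhere in this paper). Since $T = x\phi(T)$ with $\phi(u) = e^u$, Lagrange inversion supplies the one computational tool I need, namely the coefficient extraction
$$
	[x^t]\, T^k = \frac{k}{t}\,[u^{t-k}]\, e^{tu} = \frac{k}{t}\cdot\frac{t^{t-k}}{(t-k)!},
	\qquad 1 \leq k \leq t.
$$

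For the first identity, observe that the left-hand side of~(\ref{equation_c_minus_one}) is exactly the $t!\,[x^t]$ coefficient of the convolution square of $T$, because
$$
	T(x)^2 = \sum_{t \geq 2} \left( \sum_{\cd} \binom{t}{c,d}\, c^{c-1} d^{d-1} \right) \frac{x^t}{t!}.
$$
Hence the sum equals $t!\,[x^t] T^2 = t! \cdot \tfrac{2}{t}\cdot \tfrac{t^{t-2}}{(t-2)!} = 2(t-1)t^{t-2}$ after cancelling factorials, which is the claimed value.

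The second identity needs one extra observation: the series $B(x) = \sum_{c \geq 1} c^{c}\, x^c/c!$ attached to the factor $c^c$ is $B(x) = x\,T'(x) = T/(1-T)$, the last equality coming from differentiating $T = x e^T$. Thus the left-hand side of~(\ref{equation_c}) is $t!\,[x^t]\,\bigl(T\cdot B\bigr) = t!\,[x^t]\,\bigl(T^2/(1-T)\bigr)$. Rather than expanding the geometric series (which would converge only formally and leave a messy truncated sum), I will use the algebraic simplification $T^2/(1-T) = T/(1-T) - T = B - T$; extracting coefficients then gives $t!\,[x^t]B - t!\,[x^t]T = t^t - t^{t-1} = (t-1)t^{t-1}$.

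The individual steps are routine; the only real subtlety lies in correctly encoding the weight $c^c$ (as opposed to $c^{c-1}$) through the operator $x\,d/dx$, and in spotting the reduction $T^2/(1-T)=B-T$ that avoids an unwieldy summation. As a remark, both identities also admit a direct combinatorial reading: the first counts ordered pairs of rooted labelled trees partitioning $\{1,\dots,t\}$, that is, ordered two-component rooted forests, of which there are $2(t-1)t^{t-2}$, while the second counts such forests carrying one additional marked vertex in the first tree. I would mention this interpretation but carry out the generating-function computation as the primary proof.
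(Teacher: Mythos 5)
Your proposal is correct. For the first identity you take essentially the same route as the paper: recognize the sum as $t!\,[x^t]\,T(x)^2$ for the tree function $T=\sum_{n\ge 1}n^{n-1}x^n/n!$ satisfying $T=xe^T$, and apply Lagrange inversion to get $t!\cdot\frac{2}{t}\cdot\frac{t^{t-2}}{(t-2)!}=2(t-1)t^{t-2}$; the paper does exactly this computation (and also notes an alternative via the binomial-type property of the Abel polynomials, which you do not need).

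For the second identity your argument diverges from the paper's, and the comparison is worth recording. The paper disposes of~(\ref{equation_c}) in one line: multiply~(\ref{equation_c_minus_one}) by $c+d=t$, note that the two resulting sums $\sum{t\choose c,d}c^{c}d^{d-1}$ and $\sum{t\choose c,d}c^{c-1}d^{d}$ are equal by the symmetry $c\leftrightarrow d$, and divide by $2$. You instead introduce $B(x)=\sum_{c\ge 1}c^{c}x^c/c!=xT'(x)=T/(1-T)$ (the derivation $T'(1-T)=e^T=T/x$ is correct), identify the left side of~(\ref{equation_c}) as $t!\,[x^t]\,(T\cdot B)=t!\,[x^t]\,T^2/(1-T)$, and use the algebraic reduction $T^2/(1-T)=T/(1-T)-T=B-T$ to read off $t^{t}-t^{t-1}=(t-1)t^{t-1}$. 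Both arguments are complete and correct; the paper's is shorter and entirely elementary, while yours stays inside the generating-function calculus and has the minor virtue of producing the value of~(\ref{equation_c}) independently rather than as a corollary of~(\ref{equation_c_minus_one}). Your closing combinatorial gloss (ordered two-component rooted forests, with a marked vertex in the first component for the second identity) matches the paper's stated interpretation of the first sum.
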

\begin{proof}
Observe
the left-hand side of equation~(\ref{equation_c_minus_one})
enumerates the number of pairs of rooted labeled trees
on a $t$-element set, that is,
the coefficient of $x^t/t!$ in the generating function
$f(x)^2$, where
$f(x) = \sum_{t \geq 1} t^{t-1} \cdot x^t/t!$.
Noticing that $f(x)$ satisfies the functional
equation
$f(x) = x \cdot e^{f(x)}$, the coefficient can also be determined by
Lagrange inversion 
formula~\cite[Theorem 5.4.2]{Stanley_ECII} as follows:
\begin{equation*}
\left[\frac{x^t}{t!}\right] f(x)^2 
= t! \cdot \frac{2}{t} \left[x^{t-2}\right](e^x)^t 
= 2 \cdot (t-1)! \cdot \frac{t^{t-2}}{(t-2)!} = 2 \cdot (t-1) \cdot t^{t-2}.
\end{equation*}

One can also prove equation~(\ref{equation_c_minus_one})
using the fact the 
Abel polynomials
$p_n(x) = x \cdot (x-na)^{n-1}$
form a {\em sequence of binomial type}, that is,
they
satisfy the relation
$
   p_n(x+y) = \sum_{k=0}^n {n \choose k} p_k(x) p_{n-k}(y).
$
See~\cite{Rota_Kahaner_Odlyzko}.
The desired identity then follows from the substitution
$x = y = 1$, $a = -1$ and $n = t-2$.

To prove~(\ref{equation_c}),
multiply~(\ref{equation_c_minus_one}) by
$c+d = t$ and use that the two
resulting sums are equal.
\end{proof}

We now show the bounds given
in Proposition~\ref{proposition_ocean} are sharp.

\begin{theorem}
\label{theorem_theodore}
For $s \geq 0$, $t \geq 1$,
the coefficient of $x^{2s} e^{tx}$ in
the series
$f_{s+t-1}$ is given by 
\begin{equation}
\label{equation_gamma_s_t}
     [x^{2s} e^{tx}] f_{s+t-1}
         = \frac{(-1)^s}{2^s \cdot s!} 
                       \cdot \frac{t^{2(s+t-1)}}{t!}.
\end{equation}
Furthermore,
the degree sequence of
the series $f_j$ for $j \geq 1$
is equal to 
$$
\deg(f_j(x)) = (- \infty, 2j, 2(j-1), \ldots, 2,0).
$$
\end{theorem}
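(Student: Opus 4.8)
The plan is to induct on the level $n = s+t-1$ (the index of the series $f_{n}$), extracting from the differentiated identity~\eqref{equation_hello} the coefficient of the top-degree monomial $x^{2s}e^{tx}$. Writing $\gamma_{s,t} = [x^{2s}e^{tx}]f_{s+t-1}$, I first record how the operators act on a single block $p(x)e^{kx}$ with $k\ge 1$: the operator $I$ multiplies the leading coefficient by $1/k$ and preserves the degree, the operator $d^{m}/dx^{m}$ multiplies it by $k^{m}$ and preserves the degree, and a product $p(x)e^{ax}\cdot q(x)e^{bx}$ contributes to $e^{(a+b)x}$ with leading term the product of the leading terms. With these in hand, Proposition~\ref{proposition_ocean} guarantees that $[e^{tx}]f_n$ has degree at most $2s$, so $\gamma_{s,t}$ is exactly the leading coefficient of that block.

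The main computation is the case $t\ge 2$. Extracting $[x^{2s}e^{tx}]$ from~\eqref{equation_hello} with $j+1=n$ (so $s=j+2-t$), the left-hand side contributes $t\gamma_{s,t}$ and the term $f_{j+1}$ contributes $\gamma_{s,t}$, combining to $(t-1)\gamma_{s,t}$. The block $[e^{tx}]I^{j+1}(f_j)$ has degree $2s-2$ and so drops out at the top degree, while $d^{j+2}/dx^{j+2}$ multiplies the surviving contribution by $t^{j+2}$. Thus only the quadratic convolution survives, and only pairs $e^{ax}\cdot e^{bx}$ with $a+b=t$ and $a,b\ge 1$ reach degree $2s$; every block involving an $e^{0x}$ factor is of strictly smaller degree. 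Substituting the inductive formula for the lower-level $\gamma_{u,a}$ (all at levels strictly below $n$), each factor reduces to $\frac{(-1)^u}{2^u u!}\cdot\frac{a^{u+a-1}}{a!}$; the inner sum over $u+v=s$ then collapses by the binomial theorem, contributing a factor $t^{s}/s!$ and leaving $a^{a-1}b^{b-1}/(a!\,b!)$, and the outer sum over $a+b=t$ is evaluated by Abel's identity~\eqref{equation_c_minus_one}. The result is exactly $(t-1)$ times the right-hand side of~\eqref{equation_gamma_s_t}, and dividing by $t-1$ finishes this case.

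The case $t=1$ is the main obstacle, since there the factor $(t-1)$ vanishes and the top-degree comparison becomes the vacuous identity $0=0$. I would instead extract the $e^{x}$ block at one lower degree. Taking the $e^{x}$ component of~\eqref{equation_hello} and writing $P_n=[e^{x}]f_n$ (of degree $2n$ with leading coefficient $\gamma_{n,1}$), the identity reduces, using $[e^{x}]I=(1+D)^{-1}[e^{x}]$, to $P_{j+1}' = (1+D)P_j + \tfrac12(1+D)^{j+2}Q$, where $Q$ is the $e^{x}$ block of the quadratic convolution. The term $(1+D)P_j$ has degree only $2j$, so the leading coefficient of $P_{j+1}'$, at degree $2j+1$, comes solely from $Q$. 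The crucial point is that such a contribution must pair an $e^{x}$ block with an $e^{0x}$ block, and the only $e^{0x}$ block of sufficient degree is that of $f_0=e^{x}-x-1$: by Proposition~\ref{proposition_operations}(d), iterated, the $e^{0x}$ part of $I^{q}(f_q)$ has degree at most $q-1$ for $q\ge 1$, so every integration-created constant is too small. Hence $Q$ has leading coefficient $-2\gamma_{j,1}$ (from $f_0^{[0]}=-x-1$ against $[e^{x}]I^{j}(f_j)$), whence the clean first-order recursion $(2j+2)\,\gamma_{j+1,1}=-\gamma_{j,1}$; with $\gamma_{0,1}=1$ this solves to $\gamma_{j,1}=(-1)^{j}/(2^{j}j!)$, matching~\eqref{equation_gamma_s_t} at $t=1$.

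The two arguments interlock into a single induction on $n$: at level $n$ the $t=1$ value follows from level $n-1$, and each $t\ge 2$ value follows from products at levels summing to $n-1$. Once~\eqref{equation_gamma_s_t} is established, the degree-sequence claim is immediate: since $\gamma_{s,t}=\frac{(-1)^{s}}{2^{s}s!}\cdot\frac{t^{2(s+t-1)}}{t!}\ne 0$ for all $s\ge 0$ and $t\ge 1$, each block $p_{j,k}$ attains the bound of Proposition~\ref{proposition_ocean}, so that inequality is an equality and $\deg(f_j)=(-\infty,2j,2(j-1),\ldots,2,0)$.
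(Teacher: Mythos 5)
Your proposal is correct, and for $t \geq 2$ it is essentially the paper's argument in a lightly disguised form: you extract the coefficient of $x^{2s}e^{tx}$ from the differentiated identity (\ref{equation_hello}), whereas the paper extracts it directly from Lemma~\ref{lemma_theodore} to get the recursion (\ref{equation_gamma_recursion}); in both cases the induction step collapses via the binomial theorem to the Abel/tree identity (\ref{equation_c_minus_one}) of Lemma~\ref{lemma_Abel}, and the degree-sequence claim follows from Proposition~\ref{proposition_ocean} plus non-vanishing of the $\gamma_{s,t}$. Where you genuinely depart from the paper is the case $t=1$. The paper's recursion (\ref{equation_gamma_recursion}) degenerates there to $0=0$ (the factor $1-\tfrac{1}{t}$ vanishes and the sum over $c+d=t$, $c,d\geq 1$ is empty), so it determines none of the values $\gamma_{s,1}$ for $s\geq 1$; the paper only verifies the basis $(s,t)=(0,1)$ and does not address this degeneration. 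Your fix --- passing to the coefficient of $x^{2j+1}$ in the $e^{x}$ block, checking via Proposition~\ref{proposition_operations}(d),(e) that the only $e^{0x}$ factor of sufficient degree is the $-x-1$ from $f_0$, and deriving the first-order recursion $(2j+2)\gamma_{j+1,1}=-\gamma_{j,1}$ --- is sound (I verified it against $f_1,f_2,f_3$) and supplies exactly the missing branch of the induction; your observation that the two branches interlock into a single well-founded induction on the level $n=s+t-1$ is also correct, since the products appearing at level $n$ involve only levels $u+a-1$ and $v+b-1$ summing to $n-1$ with $a,b\geq 1$. In short: same engine as the paper for $t\geq 2$, plus a genuine repair of a gap at $t=1$.
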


\begin{proof}
Let
$\gamma_{s,t} = [x^{2s}e^{tx}] f_{s+t-1}$.
Note $\gamma_{s,0} = 0$ for $s \geq 0$.
Observe that for $t \geq 1$
$$
     [x^{2s}e^{tx}] (I^k(f_{s+t-1})) = 
              \frac{\gamma_{s,t}}{t^k}.
$$
Extract the coefficient
of 
$x^{2s} e^{tx}$
from Lemma~\ref{lemma_theodore}
in the case $j + 1 = s+t-1$ 
to give
\begin{eqnarray*}
    \frac{\gamma_{s,t}}{t^{s+t-1}} 
    & = & 
    \frac{\gamma_{s,t}}{t^{s+t}}
    + 0
    +\frac{1}{2} \cdot 2 \cdot [x^{2s} e^{tx}] f_0
                         \cdot I^{s+t-2}(f_{s+t-2})
    \\
    & + &  \frac{1}{2} 
	\sum_{q=1}^{s+t-3} [x^{2s} e^{tx}] I^q(f_q(x)) 
                   \cdot I^{s+t-2-q}(f_{s+t-2-q}(x)).
\end{eqnarray*}
We obtain
\begin{equation}
\label{equation_gamma_recursion}
     \frac{\gamma_{s,t}}{t^{s+t-1}}  \cdot \left(1 - \frac{1}{t}\right) = 
     \frac{1}{2} 
	\sum_{a+b = s} \sum_{\cd}
                 \frac{\gamma_{a,c}}{c^{a+c-1}}
                  \cdot
                 \frac{\gamma_{b,d}}{d^{b+d-1}}
\end{equation}
because
the sum of the
inequalities
$a + c \leq q + 1$ and 
$b+d \leq s+t-2-q+1$
implies
$a+b+c+d \leq s+t$.
Furthermore,
since
$a+b+c+d = s+t$, these two inequalities
are actually equalities, giving the index of summation
as above.

Equation~(\ref{equation_gamma_recursion}) 
can be viewed as a recursion for the
coefficients $\gamma_{s,t}$.  Hence we can prove the
theorem by induction on the quantity $s+t$.
The induction basis is
$s=0$ and $t=1$, which is straightforward to verify.
The induction step is to verify
\begin{eqnarray*}
\frac{(-1)^s}{2^s \cdot s!} \frac{t^{s+t-1}}{t!}
\left(1 - \frac{1}{t}\right)
& = &
\frac{1}{2} \sum_{a+b = s} \sum_{\cd}
             \frac{(-1)^a}{2^a a!} \cdot
             \frac{c^{a+c-1}}{c!} \cdot
             \frac{(-1)^b}{2^b \cdot b!} \cdot
             \frac{d^{b+d-1}}{d!}.
\end{eqnarray*}
Multiply by $2 \cdot (-2)^s \cdot s! \cdot t!$
to obtain
\begin{eqnarray*}
2 (t-1) \cdot t^{s+t-2} 
& = &
             \sum_{a+b = s} \sum_{\cd}
             {s \choose a,b} 
             {t \choose c,d}
             c^{a+c-1} \cdot
             d^{b+d-1} \\
& = &
             \sum_{\cd}
             {t \choose c,d}
             c^{c-1} \cdot
             d^{d-1}
             \sum_{a+b = s} 
             {s \choose a,b} 
             c^{a} \cdot
             d^{b}\\
& = &
             t^s \cdot 
             \sum_{\cd}
             {t \choose c,d}
             c^{c-1} \cdot
             d^{d-1},
\end{eqnarray*}
which is true by
equation~(\ref{equation_c_minus_one})
in Lemma~\ref{lemma_Abel}.

The equality in the degree sequence follows from the inequality
in Proposition~\ref{proposition_ocean} and the fact the coefficients
$\gamma_{s,t}$ are non-zero.
\end{proof}

\begin{corollary}
The asymptotic behavior of $\alpha_{i,j}$
as $i \rightarrow \infty$ is given by
$$
	\alpha_{i,j} \sim \frac{(j+1)^{2j+1}}{j!} \cdot (j+1)^i.
$$       
\end{corollary}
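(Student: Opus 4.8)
The plan is to read the asymptotics of $\alpha_{i,j}$ (for fixed $j$, as $i \to \infty$) directly off the closed form of the generating function $f_j$. Since
$$
	f_j(x) = \sum_{i \geq 0} \alpha_{i,j} \frac{x^{i+2}}{(i+2)!},
$$
we have $\alpha_{i,j} = (i+2)! \cdot [x^{i+2}] f_j(x)$, so it suffices to determine the coefficient of $x^{i+2}$ in $f_j$ for large $i$. By Theorem~\ref{theorem_theodore}, $f_j(x) = \sum_{k=1}^{j+1} p_{j,k}(x) e^{kx}$ with $\deg p_{j,k} = 2(j-k+1)$, a finite sum of exponentials times polynomials; hence the coefficient $[x^{i+2}] f_j$ is governed, as $i \to \infty$, by the summand with the largest growth constant $k$.

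First I would isolate the dominant summand $k = j+1$. Here $\deg p_{j,j+1} = 0$, so $p_{j,j+1}$ is a constant, namely $[x^0 e^{(j+1)x}] f_j$. Specializing equation~(\ref{equation_gamma_s_t}) to $s = 0$ and $t = j+1$ (so that $s+t-1 = j$) gives
$$
	p_{j,j+1} = [x^0 e^{(j+1)x}] f_j = \frac{(j+1)^{2j}}{(j+1)!}.
$$
Next I would extract the contribution of this leading term. Since $[x^{i+2}] e^{(j+1)x} = (j+1)^{i+2}/(i+2)!$, multiplying by $(i+2)!$ produces the candidate main term
$$
	(i+2)! \cdot \frac{(j+1)^{2j}}{(j+1)!} \cdot \frac{(j+1)^{i+2}}{(i+2)!}
	= \frac{(j+1)^{2j}}{(j+1)!}\,(j+1)^{i+2}
	= \frac{(j+1)^{2j+1}}{j!}\,(j+1)^i,
$$
which is exactly the claimed expression.

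It then remains to show the remaining summands are asymptotically negligible. For a monomial term $x^m e^{kx}$ with $k \leq j$, one computes $(i+2)!\,[x^{i+2}] x^m e^{kx} = \frac{(i+2)!}{(i+2-m)!}\, k^{\,i+2-m}$, which is $\Theta(i^m k^{i+2})$ for fixed $k$ and $m$. Since each $p_{j,k}$ has bounded degree $2(j-k+1)$ and $k \leq j < j+1$, every such contribution is $O\bigl(i^{2(j-k+1)} k^{i+2}\bigr) = o\bigl((j+1)^{i+2}\bigr)$, because the exponential ratio $(k/(j+1))^i$ decays faster than any polynomial in $i$ grows. Summing the finitely many subdominant terms preserves this bound, so the $k=j+1$ term alone dictates the leading asymptotics and the corollary follows. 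I do not expect a genuine obstacle here: the only point requiring care is confirming that the polynomial prefactors $p_{j,k}$ cannot overcome the exponential gap between $k \le j$ and $j+1$, which is the standard fact that exponential growth dominates polynomial factors.
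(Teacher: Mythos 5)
Your proof is correct and is exactly the derivation the paper intends: the paper states the corollary without proof immediately after Theorem~\ref{theorem_theodore}, and your argument --- specializing equation~(\ref{equation_gamma_s_t}) to $s=0$, $t=j+1$ to get the constant $p_{j,j+1}=(j+1)^{2j}/(j+1)!$, then noting that the terms $p_{j,k}(x)e^{kx}$ with $k\leq j$ contribute only $O\bigl(i^{2(j-k+1)}k^{i}\bigr)=o\bigl((j+1)^{i}\bigr)$ --- is the standard and evidently intended route. (A minor attribution point: the expansion $f_j=\sum_{k=1}^{j+1}p_{j,k}(x)e^{kx}$ with the degree bound is Proposition~\ref{proposition_ocean}; Theorem~\ref{theorem_theodore} supplies the exact value of the leading coefficients.)
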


By a similar argument, we can find the next coefficient in the
$f_j(x)$.

\begin{theorem}
\label{theorem_thomas}
For 
$s,t \geq 1$,
the coefficient of $x^{2s-1}  e^{tx}$ in
the series
$f_{s+t-1}$ 
is given by 
\begin{equation}
     [x^{2s-1} e^{tx}] \: f_{s+t-1}(x)
         = (-1)^s \cdot \frac{(5s+9t-8) \cdot t^{2s+2t-4}}
                       {3 \cdot 2^{s-1}(s-1)! (t-1)!}.
\end{equation}
\end{theorem}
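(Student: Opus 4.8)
The plan is to imitate the proof of Theorem~\ref{theorem_theodore}, but now tracking the \emph{second} coefficient of each exponential block rather than only the leading one. Write $\delta_{s,t} = [x^{2s-1}e^{tx}]f_{s+t-1}$ and recall from Theorem~\ref{theorem_theodore} the closed form $\gamma_{s,t} = [x^{2s}e^{tx}]f_{s+t-1} = \frac{(-1)^s}{2^s s!}\cdot\frac{t^{2(s+t-1)}}{t!}$. I would induct on $s+t$, checking the base case $s=t=1$ (where $\delta_{1,1}=[x\,e^x]f_1 = -2$ agrees with the claimed formula) directly against the explicit $f_j$ in Table~\ref{table_f_jays}. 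The one new technical ingredient is the action of the integral operator on the top two coefficients of an exponential block: a direct computation of the antiderivative of $x^m e^{cx}$, or an induction on $k$, shows that if $g$ has leading $e^{cx}$-data $[x^{2a}e^{cx}]g=\gamma_{a,c}$ and $[x^{2a-1}e^{cx}]g=\delta_{a,c}$, then $[x^{2a}e^{cx}]I^k(g)=\gamma_{a,c}/c^{k}$ and, crucially, $[x^{2a-1}e^{cx}]I^k(g)=\delta_{a,c}/c^{k}-\frac{2ak}{c^{k+1}}\gamma_{a,c}$. This second formula records the \emph{mixing} of the subleading coefficient with the leading one under integration, and is exactly what distinguishes the present argument from that of Theorem~\ref{theorem_theodore}.

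Next I would extract $[x^{2s-1}e^{tx}]$ from identity~(\ref{equation_theodore}) of Lemma~\ref{lemma_theodore} in the case $j+1=s+t-1$. The linear term $I^{j+1}(f_j)=I^{s+t-1}(f_{s+t-2})$ contributes nothing: by Proposition~\ref{proposition_ocean} the $e^{tx}$-block of $f_{s+t-2}$ has $x$-degree at most $2s-2$, and $I$ never raises the $x$-degree of a block, so the coefficient of $x^{2s-1}e^{tx}$ vanishes. The two operator powers applied to $f_{s+t-1}$ on the two sides produce, through the mixing formula, a relation of the shape $\delta_{s,t}\,\frac{t-1}{t^{s+t}} = (\text{explicit }\gamma_{s,t}\text{ correction}) + Q$, where $Q$ is the contribution of the quadratic sum. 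Writing each factor $I^q(f_q)$ and $I^{j-q}(f_{j-q})$ via the mixing formula and re-indexing by $a+b=s$, $c+d=t$ with $q=a+c-1$ (so that $a,b\ge 0$, $c,d\ge 1$, and the $a=0$ or $b=0$ terms drop out since they carry a factor $x^{-1}$), expands $Q$ into four pieces: a $\gamma\delta$ piece, a $\delta\gamma$ piece, and two $\gamma\gamma$ mixing pieces carrying the extra factors $2aq/c$ and $2b(j-q)/d$.

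Finally I would substitute the closed form for $\gamma_{a,c}$ from Theorem~\ref{theorem_theodore} and the induction hypothesis for $\delta_{a,c}$, clear the factorials and the powers of $2$ and $t$, and perform the inner sum over $a+b=s$. The plain multinomial sum $\sum_{a+b=s}\binom{s}{a,b}c^a d^b = t^s$, together with its weighted variant $\sum_{a+b=s}\binom{s}{a,b}\,a\,c^a d^b = s\,c\,t^{s-1}$ (obtained by applying $c\,\partial_c$ to $(c+d)^s$), collapses the $a,b$-dependence and leaves Abel-type sums over $c+d=t$. These are evaluated precisely by Lemma~\ref{lemma_Abel}: the unweighted sum by~(\ref{equation_c_minus_one}) and the sum carrying one extra factor of $c$ (or $d$) by~(\ref{equation_c}). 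Collecting all pieces should reproduce the linear numerator $5s+9t-8$ and the denominator $3\cdot 2^{s-1}(s-1)!\,(t-1)!$, closing the induction.

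I expect the main obstacle to be the third step: the careful bookkeeping of the two orderings together with the two integral-operator mixing terms, followed by the assembly of their Abel evaluations into a single linear polynomial in $s$ and $t$. The asymmetry between the coefficients $5$ and $9$, and the denominator factor $3$, must arise from the fact that the subleading coefficient $\delta_{a,c}$ is itself non-symmetric in its two arguments and that the mixing terms weight $c$ and $d$ differently; verifying that all of these contributions conspire to give exactly $5s+9t-8$ is where the delicate constant-tracking lies.
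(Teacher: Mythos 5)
Your plan is, in all essentials, the proof the paper gives: the same mixing formula for $I^{k}$ acting on the top two coefficients of an exponential block, the same extraction of $[x^{2s-1}e^{tx}]$ from Lemma~\ref{lemma_theodore}, the same re-indexing $q=a+c-1$ after combining the two orderings, and the same closing of the induction via the $c\,\partial_c$-weighted binomial identities and Lemma~\ref{lemma_Abel}. There is, however, one concrete error in the stated bookkeeping that would prevent the computation from closing. When you re-index the quadratic sum you restrict to $c,d\geq 1$. But the $q=0$ term (and, symmetrically, $q=s+t-2$) involves the un-integrated factor $f_0=e^{x}-x-1$, whose $e^{0\cdot x}$ block is the polynomial $-x-1$; in particular $[x^{2a-1}e^{cx}]f_0$ is nonzero for $(a,c)=(1,0)$, where it equals $-1$. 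Pairing this with $[x^{2(s-1)}e^{tx}]I^{s+t-2}(f_{s+t-2})=\gamma_{s-1,t}/t^{s+t-2}$ produces the boundary term $-\gamma_{s-1,t}/t^{s+t-2}$, which by Theorem~\ref{theorem_theodore} equals $2s\,\gamma_{s,t}/t^{s+t}$. This term is of exactly the same order as the $\gamma_{s,t}$ corrections coming from the two operator powers, and it is precisely what turns the combination $-2(s+t-1)s\,\gamma_{s,t}/t^{s+t}+2(s+t)s\,\gamma_{s,t}/t^{s+t+1}$ into the clean factor $-2(s+t)s\,(\gamma_{s,t}/t^{s+t})(1-1/t)$ matching the $(1-1/t)$ on the $\delta_{s,t}$ side. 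Dropping it shifts the recursion by $2s\,\gamma_{s,t}/t^{s+t}$, and the induction will then not reproduce $5s+9t-8$.

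A smaller caution on the same parenthetical: only the index attached to the factor carrying $x^{2a-1}$ must be positive (indeed $\delta_{0,c}=0$ and the $\gamma\gamma$ mixing term carries an explicit factor of $a$); the index on the $x^{2b}$ factor may be $0$, since $\gamma_{0,d}\neq 0$. If you literally discard all terms with $a=0$ \emph{or} $b=0$ from the combined sum, you lose, for example, the entire quadratic contribution when $s=1$. With these two boundary cases restored, your outline coincides with the paper's proof.
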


\begin{proof}
Using integration by parts we have that
for $t \neq 0$
$$
   I\left( 
             (\gamma \cdot x^{2s} + \delta \cdot x^{2s-1})
           \cdot 
              e^{tx})
    \right)
  =
             \left(\frac{\gamma}{t} \cdot x^{2s} 
               + 
              \left(\frac{\delta}{t} - 2s \cdot \frac{\gamma}{t^{2}}
              \right) \cdot x^{2s-1}\right)
           \cdot 
              e^{tx} 
     +
        \text{lower order terms.}
$$
By iterating operator $I$ $k$ times, we obtain
$$
   I^{k}\left( 
             (\gamma \cdot x^{2s} + \delta \cdot x^{2s-1})
           \cdot 
              e^{tx})
    \right)
  =
             \left(\frac{\gamma}{t^{k}} \cdot x^{2s} 
               + 
              \left(\frac{\delta}{t^{k}} - 2 \cdot k \cdot s 
              \cdot \frac{\gamma}{t^{k+1}}\right)
                 \cdot x^{2s-1}\right)
           \cdot 
              e^{tx} 
     +
        \text{lower order terms.}
$$

Let $\delta_{s,t}$ denote the coefficient
$\delta_{s,t} = [x^{2s-1} e^{t x}] f_{s+t-1}(x)$.
Note that
$\delta_{1,0} = -1$,
$\delta_{s,0} = 0$ for $s \geq 2$
and
$\delta_{0,t} = 0$ for $t \geq 0$.

Consider now the coefficient of
$x^{2s-1} e^{t x}$ in Lemma~\ref{lemma_theodore}
where $j = s+t-2$.
\begin{eqnarray*}
\frac{\delta_{s,t}}{t^{s+t-1}} 
- 2 \cdot (s+t-1) \cdot s \cdot \frac{\gamma_{s,t}}{t^{s+t}}
  & = &
   \frac{\delta_{s,t}}{t^{s+t}}
   - 2 \cdot (s+t) \cdot s \cdot \frac{\gamma_{s,t}}{t^{s+t+1}}
+ 0 \\
& & 
+ [x^{2s-1} e^{t x}]
  \frac{1}{2} \cdot
   \sum_{q=0}^{s+t-2}
        I^{q}(f_{q}(x))
      \cdot
        I^{s+t-2-q}(f_{s+t-2-q}(x)) 
\end{eqnarray*}
We now focus our attention on the last term
in the right-hand side of the above
identity.
For shorthand, we call this term $X$.
\begin{eqnarray*}
X
& = &
     \frac{1}{2} \cdot
     \sum_{q=0}^{s+t-2}
     \sum_{a+b =s} 
      \sum_{c+d=t}
       [x^{2a-1} e^{c x}]  I^{q}(f_{q}(x))
       \cdot
       [x^{2b} e^{d x}]    I^{s+t-2-q}(f_{s+t-2-q}(x)) 
\\
& + & 
     \frac{1}{2} \cdot
     \sum_{q=0}^{s+t-2}
     \sum_{a+b =s} 
      \sum_{c+d=t}
       [x^{2a} e^{c x}]  I^{q}(f_{q}(x))
       \cdot
       [x^{2b-1} e^{d x}]    I^{s+t-2-q}(f_{s+t-2-q}(x)) 
\end{eqnarray*}
In order for
$[x^{2a} e^{c x}]  I^{q}(f_{q}(x))$ to be non-zero,
we need $a+c \leq q+1$.
Similarly,
for 
$[x^{2a-1} e^{c x}]  I^{q}(f_{q}(x))$ to be non-zero,
we need $a+c \leq q+1$.
Thus the  variables of summation must satisfy
$a+c \leq  q+1$ and 
$b+d \leq s+t-q-1$.
Adding these two inequalities gives
an equality.
Hence $q=a+c-1$ and
we can remove the summation over $q$.
Furthermore, the two double summation expressions are equal,
so we obtain
\begin{eqnarray*}
X
&=&
     \sum_{a+b =s} 
      \sum_{c+d=t}
       [x^{2a-1} e^{c x}]  I^{a+c-1}(f_{a+c-1}(x))
       \cdot
       [x^{2b} e^{d x}]    I^{b+d-1}(f_{b+d-1}(x)) 
\\
& = &
     -\frac{\gamma_{s-1,t}}{t^{s+t-2}}
     +\sum_{a+b =s} 
      \sum_{\cd}
       \left(\frac{\delta_{a,c}}{c^{a+c-1}} 
               - 2 (a+c-1) \cdot a \cdot \frac{\gamma_{a,c}}{c^{a+c}}
       \right) 
       \cdot
                    \frac{\gamma_{b,d}}
                         {d^{b+d-1}}
\end{eqnarray*}
Observe the terms corresponding to $d=0$ all vanish.
The only surviving term
corresponding to $c=0$ is when $a=1$, yielding the term
$-\gamma_{s-1,t}/t^{s+t-2}$ above.

To summarize, we have
\begin{eqnarray*}
\frac{\delta_{s,t}}{t^{s+t-1}}
- 2 \cdot (s+t-1) \cdot s \cdot \frac{\gamma_{s,t}}{t^{s+t}}
& = &
\frac{\delta_{s,t}}{t^{s+t}}
- 2 \cdot (s+t) \cdot s \cdot \frac{\gamma_{s,t}}{t^{s+t+1}}
- \frac{\gamma_{s-1,t}}{t^{s+t-2}}
\\
& &
+\sum_{a+b =s} \sum_{\cd}
\left( \frac{\delta_{a,c}}{c^{a+c-1}}
              - 2 (a+c-1) \cdot a \cdot \frac{\gamma_{a,c}}{c^{a+c}}
\right)
\cdot
\frac{\gamma_{b,d}}{d^{b+d-1}}
\end{eqnarray*}
Using Theorem~\ref{theorem_theodore},
we note
\begin{eqnarray*}
- \frac{\gamma_{s-1,t}}{t^{s+t-2}}
& = & 
2s \cdot  \frac{\gamma_{s,t}}{t^{s+t}},
\end{eqnarray*}
thus simplifying the identity to
\begin{eqnarray*}
& & 
\frac{\delta_{s,t}}{t^{s+t-1}}
\cdot
\left(1 - \frac{1}{t}\right)
- 2 \cdot (s+t) \cdot s \cdot \frac{\gamma_{s,t}}{t^{s+t}}
\cdot
\left(1 - \frac{1}{t}\right)
\\
& = &
\sum_{a+b =s} \sum_{\cd}
\left( \frac{\delta_{a,c}}{c^{a+c-1}}
              - 2 (a+c-1) \cdot a \cdot \frac{\gamma_{a,c}}{c^{a+c}}
\right)
\cdot
\frac{\gamma_{b,d}}{d^{b+d-1}}.
\end{eqnarray*}
Observe that this identity is a recursion for
$\delta_{s,t}$.
Hence we prove the theorem by induction on~$s+t$.
The induction basis
$s=t=1$ is straightforward.
The induction step is to verify the following
identity:
\begin{eqnarray*}
& & 
\left(\frac{(-1)^s \cdot (5s+9t-8) \cdot t^{s+t-3}}
     {3 \cdot 2^{s-1} \cdot (s-1)! (t-1)!} 
- 2 \cdot (s+t) \cdot s \cdot \frac{(-1)^s \cdot t^{s+t-2}}{2^s \cdot s! t!}
\right)
\cdot
\left(1 - \frac{1}{t}\right)
\\
& & =
\sum_{a+b =s} \sum_{\cd}
\left( 
      \frac{(-1)^a  \cdot 2 a \cdot (5a + 9c-8) \cdot c^{a+c-2}}
            {3 \cdot 2^{a} \cdot a! c!}
      -  \frac{(-1)^a \cdot 2 a \cdot (a+c-1) \cdot c^{a+c-2}}
                                             {2^a \cdot a! c!}
\right)
\cdot
\frac{(-1)^b d^{b+d-1}}
     {2^b \cdot b! d!}
\\
& & =
\sum_{a+b =s} \sum_{\cd}
\frac{(-1)^a \cdot 2 a \cdot c^{a+c-2}}
     {2^a \cdot a! c!}
\left( \frac{(5a + 9c - 8) }
       {3}
      -  (a+c-1)
\right)
\cdot
\frac{(-1)^b d^{b+d-1}}
     {2^b \cdot b! d!}
\end{eqnarray*}
Multiplying by $3 \cdot (-2)^s  \cdot s! \cdot t!$ 
and simplifying gives
\begin{eqnarray*}
(2s+6t-8) \cdot s \cdot t^{s+t-3} \cdot (t-1)
& = &
\sum_{a+b =s} \sum_{\cd}
{s \choose a,b}
{t \choose c,d}
a \cdot c^{a+c-2} \cdot d^{b+d-1}
\cdot
(2a + 6c - 5) \\
& = &
\sum_{\cd}
{t \choose c,d}
\cdot c^{c-2} \cdot d^{d-1}
\sum_{a+b =s} 
a
\cdot
(2a + 6c - 5)
\cdot
{s \choose a,b}
\cdot
c^a \cdot d^b .
\end{eqnarray*}
By applying the operator
$c \cdot \frac{\partial}{\partial c}$ once,
respectively twice, to the binomial theorem,
we obtain the two identities
\begin{eqnarray}
\label{equation_once}
\sum_{a+b =s} 
a
\cdot
{s \choose a,b}
\cdot
c^a \cdot d^b
  & = &
s \cdot c \cdot (c+d)^{s-1} , \\
\label{equation_twice}
\sum_{a+b =s} 
a^{2}
\cdot
{s \choose a,b}
\cdot
c^a \cdot d^b
  & = &
s \cdot c \cdot (c+d)^{s-1} 
+
s \cdot (s-1) \cdot c^{2} \cdot (c+d)^{s-2} .
\end{eqnarray}
We now have
\begin{eqnarray*}
(2s+6t-8) \cdot s \cdot t^{s+t-3} \cdot (t-1)
& = &
\sum_{\cd}
{t \choose c,d}
\cdot c^{c-2} \cdot d^{d-1}
\left((6c-5) \cdot s \cdot c \cdot (c+d)^{s-1}
\right.
\\
& &
\left.
+ 2\cdot s \cdot c \cdot (c+d)^{s-1}
+ 2\cdot s \cdot (s-1) \cdot c^2 \cdot(c+d)^{s-2}
\right)
\\
&=&
s \cdot t^{s-2}
\cdot
\sum_{\cd}
{t \choose c,d}
\cdot c^{c-1} \cdot d^{d-1}
\left((6c-5) \cdot t + 2t + 2(s-1) \cdot c
\right)
\\
& = &
s \cdot t^{s-2}
\cdot
\left(-3t \cdot 2(t-1) t^{t-2} + (6t+2(s-1))\cdot (t-1)t^{t-1}
\right)
\\
&=& 
s \cdot t^{s-2}
\cdot
(t-1) \cdot t^{t-1}
\cdot
\left(-6 + 6t + 2s - 2
\right),
\end{eqnarray*}
where we have applied
identities~(\ref{equation_once})
and~(\ref{equation_once})
in the first step and Lemma~\ref{lemma_Abel}
in the third step.
This completes the induction.
\end{proof}

\section{Asymptotic behavior of the total graded dimension}

Mathematical physicists often refer to 
the sum of the coefficients of a finite Hilbert series 
as the ``dimension'' of a space.
For example, 
using techniques from
combinatorial commutative algebra~\cite{Readdy_Manin_ring},
it was 
proved that
the dimension of the Losev--Manin ring~\cite{Losev_Manin},
that is, the cohomology
ring arising from the Commutativity Equations of physics,
is given by
$n!$.

Let
$$
	\sigma_n = \sum_{i+j =n}  \alpha_{i,j}
$$
denote the total graded dimension  of the cohomology of the
moduli space
$\overline{M}_{0,n}$.

\begin{proposition}
\label{proposition_sigma_recursion}
The total graded dimension
$\sigma_n$
satisfies the recursion
\begin{equation}
\label{equation_sigma_recursion}
	\sigma_{n+2} = 2 \sigma_{n+1} 
			+ \frac{1}{2}\sum_{i+j=n} {n+4 \choose i+2, j+2}
			\sigma_i \cdot \sigma_j ,
\end{equation}
with the initial condition $\sigma_0 = 1$.
\end{proposition}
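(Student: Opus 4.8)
The plan is to sum the coefficient recursion~(\ref{equation_alphas}) over all index pairs $(i,j)$ with $i+j=n$ and $i,j\geq 0$, and then reorganize the outcome into the three pieces appearing in~(\ref{equation_sigma_recursion}). First I would separate the interior and boundary contributions to $\sigma_{n+2}$. Every pair $(i',j')$ with $i'+j'=n+2$ and $i',j'\geq 1$ can be written as $(i+1,j+1)$ for a unique $(i,j)$ with $i+j=n$, while the only remaining pairs are $(n+2,0)$ and $(0,n+2)$. Using the initial conditions $\alpha_{n+2,0}=\alpha_{0,n+2}=1$, this yields
$$
	\sigma_{n+2} = 2 + \sum_{i+j=n} \alpha_{i+1,j+1},
$$
and I would then substitute~(\ref{equation_alphas}) for each $\alpha_{i+1,j+1}$.

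For the linear terms I would note that as $(i,j)$ ranges over $i+j=n$, the shifted pairs $(i+1,j)$ range over all pairs summing to $n+1$ except $(0,n+1)$, and the pairs $(i,j+1)$ miss only $(n+1,0)$. Hence $\sum_{i+j=n}\left(\alpha_{i+1,j}+\alpha_{i,j+1}\right) = 2\sigma_{n+1}-2$, using $\alpha_{0,n+1}=\alpha_{n+1,0}=1$. The boundary constant $+2$ then cancels this $-2$, leaving exactly the term $2\sigma_{n+1}$.

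The quadratic part is the heart of the argument, and the key observation is that on the index set $i+j=n$ the top entry $i+j+4=n+4$ of the binomial coefficient is constant, so it survives untouched through the reindexing. Setting $r=i-p$ and $s=j-q$, the triple sum collapses to a single convolution
$$
	\frac{1}{2}\sum_{p+q+r+s=n} {n+4 \choose p+q+2} \alpha_{p,q}\cdot\alpha_{r,s},
$$
taken over all nonnegative $p,q,r,s$. I would then group the inner variables by $a=p+q$ and $b=r+s$, so that the inner sums produce $\sum_{p+q=a}\alpha_{p,q}=\sigma_a$ and $\sum_{r+s=b}\alpha_{r,s}=\sigma_b$, while the coefficient factors out as ${n+4 \choose a+2} = {n+4 \choose a+2, b+2}$ since $a+2+b+2=n+4$. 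This recovers $\frac{1}{2}\sum_{i+j=n}{n+4 \choose i+2,j+2}\sigma_i\cdot\sigma_j$, completing the recursion; the initial condition is immediate from $\sigma_0=\alpha_{0,0}=1$.

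The main obstacle is purely bookkeeping: I must verify that the constraints $0\leq p\leq i$, $0\leq q\leq j$ together with $i+j=n$ translate exactly into the free convolution constraint $p+q+r+s=n$ with all four indices nonnegative, so that no spurious or missing terms appear, and I must confirm that regrouping reproduces precisely the multinomial coefficient ${n+4 \choose a+2,b+2}$ rather than a shifted one. Once this matching is checked, the identity follows by straightforward algebraic combination.
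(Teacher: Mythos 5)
Your proposal is correct and is exactly the paper's approach: the paper's proof is the one-line instruction to sum the recursion~(\ref{equation_alphas}) over all $\alpha_{p,q}$ with $p+q=n+2$, and your write-up carries out that summation in full, with the boundary terms, the cancellation of the constant $2$, and the convolution regrouping all handled correctly.
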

\begin{proof}
Sum the recursion~(\ref{equation_alphas})
for $\alpha_{p,q}$ in the case $p+q = n+2$.
\end{proof}

The
Lambert W function
is the multivalued complex function which satisfies
$\LambertW(z) \cdot e^{\LambertW(z)} = z$.
The principal branch $\LambertW_{0}(z)$ and the $-1$ branch 
$\LambertW_{-1}(z)$
are the only two branches
which take on real values.
The Lambert W function
has applications
to classical tree enumeration,
signal processing 
and fluid mechanics.
For more details, see~\cite{Chapeau, Corless}.

For our purposes we will need to  consider
two branches of the Lambert W function which together form
an analytic function near
$-1/e$.

\begin{lemma}
\label{lemma_Puiseux}
Let $\Omega$ be the function defined by
$$
	\Omega(z) = \left\{
	            \begin{array}{ll}                     
                     W_{-1}(z)  &{\rm Im}(z) \geq 0, \\
                    W_{1}(z)    &{\rm Im}(z) < 0.
		    \end{array}
		    \right.
$$
Then $\Omega$ is analytic in 
$\Ccc - (-\infty, -1/e] - [0, +\infty)$.
The Puiseux series of $\Omega$ is given by
$$
	\Omega(z) = \sum_{k \geq 0} \mu_k p^k
             =-1 + p - \frac{1}{3}p^2 + \frac{11}{72} p^3 - \cdots
$$
where $p = -\sqrt{2(ez+1)}$
and the coefficients $\mu_k$ are computed by the recurrences
\begin{eqnarray*}
	\mu_k & = &\frac{k-1}{k+1} \left(\frac{\mu_{k-2}}{2} 
                             + \frac{\alpha_{k-2}}{4}\right)
                - \frac{\alpha_k}{2} - \frac{\mu_{k-1}}{k+1},\\
	\alpha_k & = & \sum_{j =2}^{k-1} \mu_j \cdot \mu_{k+1-j},
\end{eqnarray*}
with
$\mu_0 = -1, \mu_1 = 1, \alpha_0 = 2$
and
$ \alpha_1 = -1$.
\end{lemma}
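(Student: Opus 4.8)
The plan is to handle the two assertions—analyticity and the explicit Puiseux expansion with its recurrence—separately, both resting on the local structure of $we^w$ near its critical point. The map $w\mapsto we^w$ has, in the relevant region, a single critical point at $w=-1$, where $(1+w)e^w$ vanishes, with critical value $-1/e$; Taylor expanding gives $z+1/e=\tfrac{1}{2e}(w+1)^2+O((w+1)^3)$, so $z=-1/e$ is a square-root branch point at which two inverse branches coalesce. For the analyticity claim I would invoke the standard branch data for the Lambert W function (Corless et al.) to record that $W_{-1}$ is analytic on the open upper half-plane and extends continuously to the open segment $(-1/e,0)$, where it is real with values in $(-\infty,-1)$. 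Since $we^w$ has real coefficients, the branches obey $\overline{W_{-1}(\bar z)}=W_1(z)$, so $\Omega$ is by construction $W_{-1}$ on the upper half-plane and its Schwarz reflection on the lower half-plane. Because $\Ccc-(-\infty,-1/e]-[0,+\infty)$ is exactly the union of the two open half-planes with the open bridge $(-1/e,0)$, and because the boundary values of $W_{-1}$ from above and of $W_1$ from below both equal the real number $W_{-1}(x)$ along that bridge, the Schwarz reflection principle shows $\Omega$ continues analytically across $(-1/e,0)$, hence is analytic on the whole slit domain.

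For the expansion I set $w=\Omega(z)$, $v=w+1$, and $p=-\sqrt{2(ez+1)}$, so that $p^2=2(ez+1)$ and $ez=\tfrac{p^2}{2}-1$. The negative sign of the root is forced by matching: for real $z\in(-1/e,0)$ it gives $w=-1+p+\cdots<-1$, the $W_{-1}$ branch rather than $W_0$. The square-root nature of the branch point makes $v$ an analytic function of $p$ with $v(0)=0$, justifying a power series $v=\sum_{k\geq 1}\mu_k p^k$, equivalently $\Omega=\sum_{k\geq 0}\mu_k p^k$ with $\mu_0=-1$. Differentiating $we^w=z$ yields the Lambert ODE $\tfrac{dw}{dz}=\tfrac{w}{z(1+w)}$, while $p^2=2(ez+1)$ gives $\tfrac{dz}{dp}=p/e$. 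Combining by the chain rule, using $1+w=v$ and $ez=\tfrac{p^2}{2}-1$, and clearing denominators produces the polynomial differential equation
$$v\left(\frac{p^{2}}{2}-1\right)\frac{dv}{dp}=(v-1)\,p.$$

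To extract the recurrence I introduce $V=v^2=\sum_{n\geq 2}V_n p^n$, so that $\tfrac{dV}{dp}=2v\tfrac{dv}{dp}$ and the ODE becomes $\tfrac14 p^2 V'-\tfrac12 V'=(v-1)p$. The quantity $\alpha_k$ plays exactly the role $V_{k+1}=2\mu_k+\alpha_k$: removing the two extreme terms $\mu_1\mu_k$ from the convolution $[p^{k+1}]V=\sum_{i=1}^{k}\mu_i\mu_{k+1-i}$ leaves the interior sum $\alpha_k=\sum_{j=2}^{k-1}\mu_j\mu_{k+1-j}$ for $k\geq 2$, while the boundary cases give $\alpha_0=V_1-2\mu_0=2$ and $\alpha_1=V_2-2\mu_1=-1$ (using $V_1=0$, $V_2=\mu_1^2=1$). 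Extracting the coefficient of $p^k$ for $k\geq 2$ gives $\tfrac14(k-1)V_{k-1}-\tfrac12(k+1)V_{k+1}=\mu_{k-1}$; substituting $V_{k+1}=2\mu_k+\alpha_k$ and $V_{k-1}=2\mu_{k-2}+\alpha_{k-2}$ and solving for $\mu_k$ yields precisely the stated recurrence. The values $\mu_0=-1,\mu_1=1$ and the two anomalous $\alpha$-values are then confirmed by matching the coefficients of $p^0,p^1,p^2$ in the ODE.

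I expect the coefficient extraction to be routine; the genuinely delicate point is the branch bookkeeping of the first step—verifying that $W_{-1}$ meets $W_1$ analytically across $(-1/e,0)$ with no spurious cut intruding, and pinning down the sign of the square root so that the series represents the $W_{-1}$/$W_1$ pair rather than the principal branch $W_0$. A secondary subtlety is reconciling the clean convolution formula for $\alpha_k$ with its anomalous initial values $\alpha_0=2$ and $\alpha_1=-1$, which arise as boundary artifacts of stripping the two extreme terms from $[p^{k+1}]V$ rather than as genuine interior convolutions.
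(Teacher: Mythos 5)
Your proposal is correct, and it is worth noting that the paper does not actually prove this lemma: its ``proof'' consists of the two sentences that analyticity follows from piecing together the analytic parts of the Lambert W function and that the Puiseux expansion is due to Coppersmith, with a citation to Section 4 of Corless et al. What you have written is a self-contained version of exactly that cited argument: the Schwarz-reflection step (using $\overline{W_{-1}(\bar z)}=W_{1}(z)$ and the real boundary values of $W_{-1}$ on $(-1/e,0)$) is the precise content of ``piecing together the analytic parts,'' and your derivation of the recurrence from the polynomial ODE $v\left(\frac{p^{2}}{2}-1\right)\frac{dv}{dp}=(v-1)p$ via $V=v^{2}$ and $V_{k+1}=2\mu_k+\alpha_k$ reproduces the Coppersmith recurrence; I checked that solving $\frac{k-1}{4}V_{k-1}-\frac{k+1}{2}V_{k+1}=\mu_{k-1}$ for $\mu_k$ gives the stated formula and that it yields $\mu_2=-\tfrac13$, $\mu_3=\tfrac{11}{72}$, and the anomalous values $\alpha_0=2$, $\alpha_1=-1$ exactly as you describe. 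The only point I would tighten is the determination of $\mu_1$: matching the $p^1$ coefficient of the ODE only forces $\mu_1^{2}=1$, so the value $\mu_1=+1$ must come from the sign convention $p=-\sqrt{2(ez+1)}$ together with $v<0$ on the $W_{-1}$ branch (which you do address earlier in the branch bookkeeping); with that made explicit, the argument is complete and adds genuine content that the paper outsources to the literature.
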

\begin{proof}
Analyticity follows from piecing together the analytic parts of
the Lambert W function.  The Puiseux series expansion is due
to D.\ Coppersmith.  See~\cite[Section 4]{Corless}.
\end{proof}

Form the exponential generating function for the total graded dimension:
$$
	g(x) = \sum_{n \geq 0} \sigma_n \frac{x^{n+2}}{(n+2)!}.
$$
\begin{proposition}
The function $g(x)$ satisfies the differential equation
\begin{equation}
\label{equation_differential_equation}
	g'(x) = \frac{x + 2 g(x)}{1-g(x)}
\end{equation}
with boundary condition $g(0) = 0$.
\end{proposition}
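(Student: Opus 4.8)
The plan is to turn the recursion~(\ref{equation_sigma_recursion}) into the stated differential equation by passing to the exponential generating function $g(x)$, working throughout with formal power series. The natural weighting is to multiply~(\ref{equation_sigma_recursion}) by $x^{n+4}/(n+4)!$ and sum over $n \geq 0$, since the multinomial coefficient carries a factor of $(n+4)!$ in its numerator. This produces three pieces, one from each summand on the right-hand side, and I would evaluate each by a reindexing of the summation variable.

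For the left-hand side, $\sum_{n \geq 0} \sigma_{n+2}\, x^{n+4}/(n+4)!$ becomes $g(x) - \sigma_0 x^2/2 - \sigma_1 x^3/6$ after setting $m = n+2$, the two subtracted terms restoring the missing $m=0,1$ contributions. The linear term $2\sum_{n\geq 0}\sigma_{n+1}\,x^{n+4}/(n+4)!$ reindexes via $m=n+1$ to $2\sum_{m\geq 1}\sigma_m\, x^{m+3}/(m+3)!$; recognizing that $\int_0^x g(t)\,dt = \sum_{n\geq 0}\sigma_n\, x^{n+3}/(n+3)!$, this equals $2\int_0^x g(t)\,dt - \sigma_0 x^3/3$. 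The essential observation concerns the quadratic term: because $i+j=n$ forces $(i+2)+(j+2)=n+4$, the coefficient $\binom{n+4}{i+2,j+2}$ is exactly $(n+4)!/\bigl((i+2)!\,(j+2)!\bigr)$, so after multiplication by $x^{n+4}/(n+4)!$ the double sum factors as the Cauchy product $\tfrac12 g(x)^2$. Assembling the three pieces and inserting the initial values $\sigma_0 = 1$ and $\sigma_1 = \alpha_{0,1}+\alpha_{1,0} = 2$ (the latter from the initial conditions $\alpha_{i,0}=\alpha_{0,i}=1$), the cubic terms cancel and leave the integral identity
\[
  g(x) - \frac{x^2}{2} = 2\int_0^x g(t)\,dt + \frac{1}{2}\,g(x)^2 .
\]
Differentiating in $x$ gives $g'(x) - x = 2g(x) + g(x)g'(x)$, which rearranges at once to $g'(x)\bigl(1 - g(x)\bigr) = x + 2g(x)$, i.e.\ the asserted equation~(\ref{equation_differential_equation}); the boundary condition $g(0)=0$ is immediate since the lowest-order term of $g$ is $\sigma_0 x^2/2$.

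The structurally crucial step is the factorization of the convolution into $\tfrac12 g^2$, which is what makes the recursion translate into an autonomous first-order equation. The only genuine bookkeeping hazard is tracking the low-order correction terms in $x^2$ and $x^3$: one must verify that $\sigma_1 = 2$ so that the two cubic contributions cancel cleanly, leaving no spurious polynomial remainder before differentiation. Everything else is a routine reindexing of formal power series.
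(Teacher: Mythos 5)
Your proposal is correct and follows essentially the same route as the paper: weight the recursion by $x^{n+4}/(n+4)!$ (the paper uses $x^n/(n+4)!$ and multiplies by $x^4$ later, a purely cosmetic difference), recognize the convolution as $\tfrac12 g(x)^2$, observe that the cubic correction terms cancel (using $\sigma_1=2$), and differentiate the resulting integral identity $g(x)-x^2/2 = 2\int_0^x g(t)\,dt + \tfrac12 g(x)^2$. Your explicit verification that $\sigma_1=2$ forces the cancellation is a small point the paper leaves implicit.
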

\begin{proof}
Multiplying~(\ref{equation_sigma_recursion})
by $x^n/(n+4)!$ and summing over $n \geq 0$ gives
\begin{equation}
\label{equation_smiles}
	\sum_{n \geq 0} \frac{\sigma_{n+2} \cdot x^n}{(n+4)!} =
	2 \sum_{n \geq 0} \frac{\sigma_{n+1} \cdot x^n}{(n+4)!}
	+ \frac{1}{2} \left(\frac{1}{x^2} g(x)
                      \right)^2.
\end{equation}
Observe
$g'(x) = \sum_{n \geq 0} \sigma_n \cdot x^{n+1}/(n+1)!$
and
$$
	I(g(x)) = x^4 \left(\sum_{m \geq 0}  
                            \sigma_{m+1} \cdot \frac{x^m}{(m+4)!}
                      \right)
	          + \frac{x^3}{3!}
$$
Thus
we may rewrite~(\ref{equation_smiles})
as
\begin{equation}
\label{equation_clouds}
	g(x) - \frac{x^2}{2} - \frac{2x^3}{3!}
	=
	2 \int_{0}^x g(t) dt
	- \frac{2x^3}{3!}
	+ \frac{1}{2} g(x)^2.
\end{equation}
Differentiating~(\ref{equation_clouds})
gives the desired differential equation.
\end{proof}

This generating function is related to the Lambert
W function.

\begin{theorem}
\label{theorem_generating_Lambert}
The function $g(z)$ is given by
\begin{equation}
\label{equation_gee_whiz}
	g(z) = 1 - (z+2) \left(1 + \frac{1}{\Omega(-e^{-2}(z+2))}
			 \right).
\end{equation}
It is analytic in
$\Ccc - (-\infty, -2] - [e-2, +\infty)$.
\end{theorem}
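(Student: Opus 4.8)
The plan is to verify that the right-hand side of~(\ref{equation_gee_whiz}) satisfies the differential equation~(\ref{equation_differential_equation}) together with the boundary condition $g(0)=0$, and then to invoke uniqueness of the analytic solution of this first-order ODE. Since $g(x)=\sum_{n\geq 0}\sigma_n x^{n+2}/(n+2)!$ is analytic near the origin and solves~(\ref{equation_differential_equation}) with $g(0)=0$, and since the field $(x+2g)/(1-g)$ is analytic in $(x,g)$ wherever $g\neq 1$, the Cauchy existence--uniqueness theorem guarantees that any analytic function agreeing with these data in a neighborhood of $0$ coincides with $g$ there, and hence throughout the common connected domain by analytic continuation. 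So it suffices to check the candidate satisfies the ODE and the initial value.

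To carry out the verification I would introduce the shorthand $w=\Omega(-e^{-2}(z+2))$, so that the defining relation of the Lambert W function gives $w\,e^{w}=-e^{-2}(z+2)$, equivalently $z+2=-w\,e^{w+2}$. Substituting this into~(\ref{equation_gee_whiz}) rewrites the candidate in the clean parametric form $g=1+(w+1)e^{w+2}$, whence $1-g=-(w+1)e^{w+2}$. Differentiating $w\,e^{w}=-e^{-2}(z+2)$ implicitly yields $w'=-e^{-2}/((1+w)e^{w})$, and then $g'=(w+2)e^{w+2}\,w'=-(w+2)/(w+1)$. On the other hand, using $z+2=-w\,e^{w+2}$ one computes $z+2g=(w+2)e^{w+2}$, so that $(z+2g)/(1-g)=-(w+2)/(w+1)=g'$, confirming the ODE. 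For the boundary condition, at $z=0$ the argument of $\Omega$ is $-2e^{-2}$; since $-2e^{-2}\in(-1/e,0)$ the convention in Lemma~\ref{lemma_Puiseux} selects the branch value $w=\LambertW_{-1}(-2e^{-2})=-2$, and the parametric form then gives $g(0)=1+(-1)e^{0}=0$, as required.

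For the analyticity statement I would transport the branch cuts of $\Omega$ through the affine substitution $z\mapsto -e^{-2}(z+2)$. By Lemma~\ref{lemma_Puiseux}, $\Omega$ is analytic on $\Ccc-(-\infty,-1/e]-[0,+\infty)$; the preimage of the cut $[0,+\infty)$ is the ray $(-\infty,-2]$ (where $z+2\leq 0$), and the preimage of $(-\infty,-1/e]$ is the ray $[e-2,+\infty)$ (where $z+2\geq e$). Since the affine map and the factor $(z+2)$ are entire, and since the non-principal branches $W_{\pm 1}$ omit the value $0$ so that $1/\Omega$ introduces no new singularities, $g$ is analytic exactly on $\Ccc-(-\infty,-2]-[e-2,+\infty)$. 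This region is connected, the two deleted rays lying on the real axis while the open interval $(-2,e-2)$ joins the upper and lower half-planes, which legitimizes the continuation from the local identity near $0$ to~(\ref{equation_gee_whiz}) on the whole domain. The step I expect to require the most care is the bookkeeping of the branch of $\Omega$: one must confirm both that the $\mathrm{Im}\geq 0$ convention produces $w=-2$ at $z=0$ and that this is the branch whose analytic continuation is consistent with the cut structure above, rather than the principal branch $\LambertW_{0}$.
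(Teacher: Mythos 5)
Your proposal is correct and follows essentially the same route as the paper: the same change of variables $g = 1+(w+1)e^{w+2}$ (the paper writes $g = 1-(z+2)(u+1)$ with $u = 1/w$), the same appeal to the functional equation $\LambertW(z)e^{\LambertW(z)}=z$, and the same branch identification via $\LambertW_{-1}(-2e^{-2})=-2$. The only difference is one of direction --- you verify the closed form against the differential equation~(\ref{equation_differential_equation}) and invoke uniqueness, whereas the paper derives it by separating variables --- which does not change the substance of the argument.
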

\begin{proof}
Substitute
$
	g(x) = 1 - (x+2)(u+1)
$
in the differential equation~(\ref{equation_differential_equation}).
We obtain the equation
$$
	\frac{dx}{x+2} = -\left(\frac{1}{u} + \frac{1}{u^2}\right) du.
$$
Integrating gives
$$
	\log|x+2| + C = - \log|u| + \frac{1}{u}.
$$
The boundary condition $g(0) = 0$ 
implies
$u(0) = -1/2$ and $C = -2$.
Hence
$$
	-e^{-2}(x+2) = \frac{1}{u} \cdot e^{\frac{1}{u}}.
$$
As the  Lambert W  function  satisfies
the functional equation 
$\LambertW(z) \cdot e^{\LambertW(z)} = z$,
we have that
$W(-(x+2)/e^2) = 1/u$.
Recall that the value $x=0$ implies $u = -1/2$, so
$W(-2e^{-2}) = -2$.
This implies
$\LambertW_{-1}$ is the correct real branch 
of the Lambert W function to select.
See~\cite{Corless}.
Hence
we obtain the generating function
$g(x) = 1 - (x+2) (1 + 1/\LambertW_{-1}(-e^{-2}(x+2)))$.
The analytic continuation to
$\Ccc -  (-\infty, -2] - [e-2, +\infty)$
follows from Lemma~\ref{lemma_Puiseux}.
\end{proof}

In order to obtain asymptotic 
behavior 
of the total graded dimension, one needs to apply
results of Flajolet and Odlyzko.
See~\cite[Section 4.2]{Flajolet_Odlyzko}
and the overview article~\cite{Odlyzko}.
We follow Flajolet and Odlyzko's notation and
terminology.
Write $f(z) \sim g(z)$ as $z \rightarrow w$ to mean
that $\frac{f(z)}{g(z)} \rightarrow 1$ as $z \rightarrow w$.
For $r, \eta >0$ and $0 < \varphi < \pi/2$
define
$$
    \Delta(r, \varphi, \eta) =
    \{z: \:\: |z| \leq r + \eta, \:\: |\arg(z-r)| \geq \varphi\}.
$$
A function $L(u)$ is of {\em slow variation at $\infty$}
if it satisfies
($i$) there exists a positive real number $u_0$ and
an angle $\varphi$ with $0 < \varphi < \pi/2$ such that
$L(u) \neq 0$ and is analytic for
$\{u : -(\pi - \varphi) \leq \arg (u-u_0) \leq \pi - \varphi\}$
and
($ii$) there exists a function $\epsilon(x)$ defined for
$x \geq 0$
satisfying
$\lim_{x \rightarrow + \infty} \epsilon(x) = 0$ and
for all
$\theta \in [-(\pi - \varphi), \pi - \varphi]$ and
$u \geq u_0$ we have
$$
  \left| \frac{L(ue^{i \theta})}{L(u)} - 1 \right| < \epsilon(u)
   \mbox{   and   }
  \left| \frac{L(u \log^2(u))}{L(u)} - 1 \right| < \epsilon(u).
$$
The following asymptotic theorem appears 
in~\cite[Theorem 5]{Flajolet_Odlyzko}.

\begin{theorem}[Flajolet and Odlyzko]
\label{theorem_Flajolet_Odlyzko}
Assume $f(z)$ is analytic on
$\Delta(r, \varphi, \eta) - \{r\}$ and
$L(u)$ is a function of
slow variation at $\infty$.
If
$\alpha \in \Rrr$
and $\alpha \not\in \{0, 1, 2, \ldots \}$ and
$$
	f(z) \sim (r-z)^{\alpha} \cdot L\left(\frac{1}{r-z}\right)
$$
uniformly as $z \rightarrow r$ for
$z \in \Delta(r, \varphi, \eta) - \{r\}$,
then
$$
	[z^n] f(z) \sim \frac{r^{-n} \cdot n^{-\alpha -1}}
                              {\Gamma(-\alpha)}
			      \cdot
	                       L(n).
$$
\end{theorem}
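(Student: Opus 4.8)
The plan is to establish this by \emph{singularity analysis}, the transfer method of~\cite{Flajolet_Odlyzko}; since the statement is precisely Theorem~5 there, I sketch the architecture of its proof. First I would normalize the singularity to $r = 1$ by the substitution $z \mapsto rz$: this sends $[z^n] f(z)$ to $r^{-n} [z^n] \hat{f}(z)$ for a function $\hat{f}$ with a unit singularity satisfying $\hat{f}(z) \sim (1-z)^{\alpha} L(1/(1-z))$, where the slow-variation hypothesis on $L$ is used to replace the rescaled argument $1/(r(1-z))$ inside $L$ by $1/(1-z)$ up to a $(1+o(1))$ factor. The factor $r^{-n}$ in the conclusion is produced by this rescaling, so it suffices to treat $r = 1$ and show $[z^n] \hat{f}(z) \sim \Gamma(-\alpha)^{-1} n^{-\alpha - 1} L(n)$.

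The engine is Cauchy's coefficient formula
$$
	[z^n] \hat{f}(z) = \frac{1}{2 \pi i} \oint \hat{f}(z) \, \frac{dz}{z^{n+1}}.
$$
Because $\hat{f}$ is analytic on the indented domain $\Delta(1, \varphi, \eta) - \{1\}$, the circle of integration may be deformed into a Hankel-type contour $\mathcal{H}$ that approaches the singularity at $z = 1$ to within distance of order $1/n$, flares outward along two rays, and closes on a large arc. I would split $\mathcal{H}$ into an inner portion near $z = 1$, where the change of variables $z = 1 + t/n$ is made, and an outer portion consisting of the rays and the far arc, whose contribution one checks to be of strictly smaller order.

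The heart is the \emph{standard function scale}, the case $L \equiv 1$, that is, the claim $[z^n](1-z)^{\alpha} \sim n^{-\alpha - 1}/\Gamma(-\alpha)$. Under $z = 1 + t/n$ one has $(1-z)^{\alpha} = n^{-\alpha}(-t)^{\alpha}$ and $z^{-n-1} \to e^{-t}$, so after factoring out $n^{-\alpha - 1}$ the inner integral converges to the Hankel representation
$$
	\frac{1}{\Gamma(-\alpha)} = \frac{1}{2 \pi i} \int_{\mathcal{H}} e^{-t} (-t)^{\alpha} \, dt,
$$
which supplies the constant; here the hypothesis $\alpha \notin \{0, 1, 2, \ldots\}$ is precisely what keeps $1/\Gamma(-\alpha)$ from vanishing and the Hankel integral nondegenerate.

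To incorporate $L$, I would rerun the contour integral with the factor $L(1/(1-z))$ present in the integrand. On the dominant inner arc, where $|1 - z| \asymp 1/n$, condition (i) (analyticity and nonvanishing of $L$ in a sector) together with the bound $|L(u e^{i\theta})/L(u) - 1| < \epsilon(u)$ of condition (ii) allows $L(1/(1-z))$ to be replaced by $L(n)$ up to $(1 + o(1))$ and pulled out of the integral; the second bound of condition (ii), the one involving $L(u \log^2 u)$, is exactly what is needed to dominate the outer portion of the contour, where $1/|1-z|$ ranges up to size of order $n \log^2 n$. Finally, since the hypothesis gives only the \emph{asymptotic} equivalence $\hat{f}(z) \sim (1-z)^{\alpha} L(1/(1-z))$, I would write $\hat{f}(z) = (1-z)^{\alpha} L(1/(1-z)) (1 + \rho(z))$ with $\rho(z) \to 0$ and invoke the companion $o$-transfer lemma to see that the remainder integrates to $o(n^{-\alpha - 1} L(n))$. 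The main obstacle is exactly this final bookkeeping: propagating the $(1 + o(1))$ error and the slowly varying factor simultaneously across every piece of the Hankel contour while maintaining uniformity as $z \to 1$. This uniform control is the technical core of~\cite{Flajolet_Odlyzko}; once it is in place, the evaluation of the Hankel integral is routine.
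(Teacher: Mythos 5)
The paper does not prove this statement: it is quoted verbatim as Theorem~5 of \cite{Flajolet_Odlyzko}, so there is no internal proof to compare yours against. Your sketch correctly reproduces the architecture of the original singularity-analysis argument --- rescaling to a unit singularity to extract the factor $r^{-n}$, deforming the Cauchy contour to a Hankel-type contour approaching $z=1$ at distance of order $1/n$, the change of variables $z = 1 + t/n$ reducing the standard scale to Hankel's representation of $1/\Gamma(-\alpha)$, and the two slow-variation conditions controlling $L$ on the inner and outer portions of the contour respectively --- so it is a faithful roadmap of where the result comes from, with the caveat you yourself flag: the uniform propagation of the $(1+o(1))$ error across the contour is the substantive technical content of \cite{Flajolet_Odlyzko} and is only gestured at here, which is appropriate for a cited classical theorem.
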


Applying Theorem~\ref{theorem_Flajolet_Odlyzko}
to the Puiseux series of the analytic function
$\Omega$
gives the following result.

\begin{theorem}
The total dimension of the cohomology ring of the
moduli space
$\overline{M}_{0,n}$  has
asymptotic behavior
$$
        \sigma_{n}
      \sim
        \sqrt{\frac{e}{2\pi}} \cdot
        (e-2)^{-n-2} \cdot n^{-3/2} \cdot (n+2)!  
	\:\:\:\:\mbox{    as $n \rightarrow \infty$}.
$$
\end{theorem}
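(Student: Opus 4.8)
The plan is to obtain the asymptotics of $\sigma_n$ by singularity analysis of its exponential generating function $g$, applying the transfer theorem of Flajolet and Odlyzko (Theorem~\ref{theorem_Flajolet_Odlyzko}). Since $\sigma_n = (n+2)!\,[x^{n+2}]g$, it suffices to determine the asymptotics of $[x^m]g$ as $m\to\infty$ and then set $m=n+2$. By Theorem~\ref{theorem_generating_Lambert}, $g$ is analytic in $\Ccc - (-\infty,-2] - [e-2,+\infty)$, and since the coefficients $\sigma_n$ are positive, Pringsheim's theorem forces the dominant singularity onto the positive real axis, hence at $r = e-2$. I would first note that this region of analyticity contains a punctured domain $\Delta(r,\varphi,\eta)-\{r\}$ on which the hypotheses of Theorem~\ref{theorem_Flajolet_Odlyzko} can be checked.

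The heart of the argument is the local singular expansion of $g$ at $z=r$. At $z=r=e-2$ the inner argument $w=-e^{-2}(z+2)$ equals $-1/e$, which is precisely the branch point of $\Omega$ treated in Lemma~\ref{lemma_Puiseux}. Using $z+2 = e-(r-z)$, one computes $ew+1 = (r-z)/e$, so the Puiseux variable is $p = -\sqrt{2(ew+1)} = -\sqrt{2(r-z)/e}$, which carries the half-integer singular behavior $(r-z)^{1/2}$. Substituting the series $\Omega = -1 + p - \tfrac{1}{3}p^2 + \cdots$ of Lemma~\ref{lemma_Puiseux} and inverting gives $1 + 1/\Omega = -p - \tfrac{2}{3}p^2 - \cdots$; multiplying by $-(z+2)$ and adding $1$ yields
$$ g(z) = 1 + (z+2)\left(p + \tfrac{2}{3}p^2 + \cdots\right). $$
Because $p^2 = 2(r-z)/e$ is analytic in $(r-z)$, the lowest-order half-integer contribution comes solely from $(z+2)\cdot p$ evaluated at $z=r$, namely $e\cdot p = -\sqrt{2e}\,(r-z)^{1/2}$. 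Hence near $z=r$,
$$ g(z) = \left[\text{analytic part}\right] - \sqrt{2e}\,(r-z)^{1/2} + O\!\left((r-z)^{3/2}\right). $$

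With this expansion in hand, I would apply Theorem~\ref{theorem_Flajolet_Odlyzko} with $\alpha = 1/2$ and the (trivially slowly varying) constant $L \equiv -\sqrt{2e}$, the analytic part and the $O((r-z)^{3/2})$ terms contributing only to lower-order asymptotics. Using $\Gamma(-1/2) = -2\sqrt{\pi}$ this gives
$$ [x^m]g \sim \frac{r^{-m}\,m^{-3/2}}{\Gamma(-1/2)}\cdot(-\sqrt{2e}) = \sqrt{\frac{e}{2\pi}}\,(e-2)^{-m}\,m^{-3/2}. $$
Finally, setting $m=n+2$ and using $\sigma_n = (n+2)!\,[x^{n+2}]g$ together with $(n+2)^{-3/2}\sim n^{-3/2}$ produces the claimed estimate $\sigma_n \sim \sqrt{e/(2\pi)}\,(e-2)^{-n-2}\,n^{-3/2}\,(n+2)!$.

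The main obstacle is the local expansion step: one must compose the Puiseux series of $\Omega$ with the outer substitution, correctly separate the analytic powers of $(r-z)$ (in particular the term $p^2$) from the genuinely singular $(r-z)^{1/2}$ term, and pin down the constant $-\sqrt{2e}$ exactly, since it fixes the multiplicative constant $\sqrt{e/(2\pi)}$ in the final answer. A secondary technical point is verifying that $g$ meets the uniform $\Delta$-domain hypothesis of Theorem~\ref{theorem_Flajolet_Odlyzko}, which rests on the analyticity of $\Omega$ away from its cut established in Lemma~\ref{lemma_Puiseux}.
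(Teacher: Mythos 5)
Your proposal is correct and follows essentially the same route as the paper: both use the Lambert W representation of $g$ from Theorem~\ref{theorem_generating_Lambert}, extract the singular term $-\sqrt{2e}\,(e-2-z)^{1/2}$ at $r=e-2$ from the Coppersmith Puiseux expansion of Lemma~\ref{lemma_Puiseux}, and transfer via Theorem~\ref{theorem_Flajolet_Odlyzko} with $\alpha=1/2$ and $L\equiv-\sqrt{2e}$. Your only (harmless) additions are the explicit appeal to Pringsheim's theorem and the slightly more careful bookkeeping of the analytic part versus the $(r-z)^{1/2}$ term, which the paper handles by passing to $f(z)=g(z)-1$.
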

\begin{proof}
We will apply Theorem~\ref{theorem_Flajolet_Odlyzko}
to the function
$f(z) = g(z) - g(e-2) = g(z) - 1$.
By Lemma~\ref{lemma_Puiseux}
$$
     \Omega(\zeta) = -1 + p + O\left(p^{2}\right),
$$
where
$p = - \sqrt{2(e \zeta + 1)}$.
By inverting this relation we have
\begin{eqnarray*}
     \frac{1}{\Omega(\zeta)}
  & = &
      -1 - p + O\left(p^{2}\right) \\
  & = &
      -1 + \sqrt{2e} \cdot \sqrt{\zeta + \frac{1}{e}}  
      + O\left(\zeta +\frac{1}{e}\right) .
\end{eqnarray*}
In the above,
substitute $\zeta = - e^{-2} (z+2)$,
add $1$ and then multiply with $-(z+2) = -e + (e-2-z)$.
We obtain
$$   f(z) = - e \cdot \sqrt{2e} \cdot \sqrt{\frac{(e-2-z)}{e^{2}}}
            + O\left(e-2-z\right) . $$
In other words,
$$   
     f(z) \sim - \sqrt{2e} \cdot \sqrt{e-2-z}   
$$
uniformly as $z \rightarrow e-2$
for $z$ in a deleted pie-shaped neighborhood
$\Delta(r,\varphi,\eta)$ of
$r = e-2$.
By letting $r = e-2$, $\alpha = 1/2$ and $L$ be the constant function 
$-\sqrt{2e}$, Theorem~\ref{theorem_Flajolet_Odlyzko}
applies. We conclude that
$$
        [x^{n}] g(x)
     =
        [x^{n}] f(x)
     \sim
        -
	\frac{(e-2)^{-n} \cdot n^{-3/2}}{\Gamma(-1/2)} 
        \cdot \sqrt{2e}
     =
       \sqrt{\frac{e}{2\pi}} \cdot
       (e-2)^{-n} \cdot n^{-3/2}    $$
as $n \longrightarrow \infty$.
Since $[x^n] g(x) = \sigma_{n-2}/n!$,
we obtain
$$
        \sigma_{n}
      \sim
        \sqrt{\frac{e}{2\pi}} \cdot
        (e-2)^{-n-2} \cdot (n+2)^{-3/2} \cdot (n+2)!  .
$$
Since $(n+2)^{-3/2} \sim n^{-3/2}$ as
$n \rightarrow \infty$, the result follows.
\end{proof}

An equivalent asymptotic expression appears without
proof in~\cite[Chapter 4, page 194]{Manin}.
See~\cite{Goulden_Litsyn_Shevelev} for 
a recursion for the total dimension of a generalization related
to configuration spaces.

\section{Concluding remarks}

It remains to find
the complete sequence of polynomials
to describe the coefficients in the series~$f_j$.
We make the following conjecture.
 
\begin{conjecture}
For $k$ a non-negative integer and
$s,t \geq k$,
the coefficient of
$x^{2s-k} e^{tx}$ in
$f_{s+t-1}(x)$ is given by
\begin{equation}
     [x^{2s-k} e^{tx}] f_{s+t-1}(x)
         = (-1)^s \frac{t^{2s+2t-2k-2}}{2^{s-k} \cdot (s-k)! (t-k)!}
          \cdot Q_k(s,t),
\end{equation}
where
$Q_k(s,t)$ is a polynomial in the variables 
$s$ and $t$ of degree $k$.
\end{conjecture}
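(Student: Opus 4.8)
The plan is to prove the conjecture by induction on $k$, with Theorems~\ref{theorem_theodore} and~\ref{theorem_thomas} serving as the base cases $k=0$ and $k=1$. Write $c_k(s,t)=[x^{2s-k}e^{tx}]f_{s+t-1}(x)$, so that the claim becomes: the normalized quantity
\[
Q_k(s,t) = (-1)^s\,2^{s-k}\,(s-k)!\,(t-k)!\;t^{-(2s+2t-2k-2)}\,c_k(s,t)
\]
is a polynomial in $s$ and $t$ with $\deg Q_k = k$. As in the two known cases, the engine is Lemma~\ref{lemma_theodore}: I extract the coefficient of $x^{2s-k}e^{tx}$ from the integral operator identity taken at $j=s+t-2$ and read off a recursion for $c_k(s,t)$.

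The single computational input needed in full generality is the action of the iterated operator $I^m$ on one exponential block. On the $e^{tx}$-component (with $t\ge1$) the operator $I$ acts on the polynomial factor as $(t+\partial)^{-1}$, so
\[
[x^{2s-k}e^{tx}]\,I^m\!\bigl(f_{s+t-1}\bigr)
= \sum_{k'=0}^{k}(-1)^{k-k'}\binom{m+k-k'-1}{k-k'}\,
\frac{(2s-k')^{\underline{\,k-k'}}}{t^{\,m+k-k'}}\;c_{k'}(s,t),
\]
a finite, explicit combination of $c_0(s,t),\dots,c_k(s,t)$ (the case $k\le1$ reproduces the integration-by-parts formulas opening the proof of Theorem~\ref{theorem_thomas}). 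Applying this to the left side and to the term $I^{s+t}(f_{s+t-1})$ of Lemma~\ref{lemma_theodore} produces the self-coefficient $(1-1/t)$ on $c_k(s,t)$ (nonzero precisely because $t\ge k\ge2$), same-argument corrections involving only $c_{k'}(s,t)$ with $k'<k$ (controlled by the outer induction, as $t\ge k>k'$), and terms from $I^{s+t-1}(f_{s+t-2})$ involving $c_{k'}(s-1,t)$ at strictly smaller $s+t$ (controlled by the inner induction). For the convolution $\tfrac12\sum_q I^q(f_q)\,I^{j-q}(f_{j-q})$ the degree bounds of Proposition~\ref{proposition_ocean} force the splitting to be extremal in both factors: writing $e^{tx}=e^{cx}e^{dx}$ and distributing the power of~$x$, the two nonvanishing constraints add up to an equality, so $q$ is pinned to $a+c-1$ and the index $q$ disappears, leaving $\sum_{a+b=s}\sum_{\cd}$ over products $c_{k_1}(a,c)\,c_{k_2}(b,d)$ with $k_1+k_2\le k$ and $a+c,\,b+d<s+t$.

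With the recursion assembled I substitute, at the arguments of smaller $s+t$, the closed forms supplied by the inner induction, and divide through by the normalization $N_k(s,t)=(-1)^s t^{2s+2t-2k-2}/\bigl(2^{s-k}(s-k)!\,(t-k)!\bigr)$. The inner $a$-summations collapse through the identities obtained by applying $(c\,\partial/\partial c)^m$ to the multinomial theorem, $\sum_{a+b=s}a^m\binom{s}{a,b}c^a d^b=(c\,\partial/\partial c)^m(c+d)^s$, of which~(\ref{equation_once}) and~(\ref{equation_twice}) are the cases $m=1,2$; the $c$-summations reduce to the generalized Abel identities $\sum_{\cd}\binom{t}{c,d}c^{\,c-1+i}d^{\,d-1+j}=t^{t-2}R_{i,j}(t)$ for polynomials $R_{i,j}$, of which Lemma~\ref{lemma_Abel} furnishes the seeds $(i,j)=(0,0),(1,0)$. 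Two features make the scheme close formally: the powers of $t$ released by these evaluations are exactly what $N_k(s,t)$ absorbs, and every $R_{i,j}(t)$ carries a factor $(t-1)$ that cancels the $(1-1/t)$ on the self-side, so $Q_k(s,t)$ comes out a genuine polynomial rather than a rational function. A leading-term count --- products $Q_{k_1}Q_{k_2}$ contributing degree $k_1+k_2\le k$, the falling factorials and the $(c\,\partial/\partial c)^m$ expansions supplying the remainder --- would give $\deg Q_k\le k$, and one nonvanishing top-coefficient computation would upgrade this to equality.

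The hard part, and I suspect the reason the statement is left as a conjecture, is that the collapsed convolution is not confined to the regime $t\ge k$ in which the closed form is asserted. Taking $c=1$ (equivalently $d=t-1$) makes the first factor contribute $c_{k_1}(a,1)$ with $k_1\ge2$, and the integration-by-parts combination above shows that $c_{k_1}(a,1)$ survives with coefficient $1$; yet these ``deep, small-$t$'' coefficients are \emph{not} given by the conjectured formula --- its normalization $1/(t-k)!$ degenerates there, while the coefficients are demonstrably nonzero (for instance $c_2(1,1)=-2$). A naive induction therefore does not close: one must separately control the coefficients of the lowest exponential block $p_{a,1}(x)$ of $f_a$. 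The most promising routes are to discover an independent closed form for these boundary values, to exhibit a cancellation showing that their aggregate contribution to $Q_k(s,t)$ is nonetheless polynomial, or to trade them --- via the Poincar\'e duality of Proposition~\ref{proposition_symmetric} --- for coefficients already in the accessible range. Failing such a device, the cleanest alternative is to abandon the coefficient-by-coefficient recursion and instead solve the integral operator identity of Lemma~\ref{lemma_theodore} for the full polynomials $p_{j,k}(x)$ in closed form; a formula for $p_{s+t-1,t}(x)$ would deliver every $Q_k$ at once and bypass the boundary difficulty entirely.
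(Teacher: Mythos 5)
This statement is left as an open conjecture in the paper --- the author proves only the cases $k=0$ and $k=1$ (Theorems~\ref{theorem_theodore} and~\ref{theorem_thomas}, giving $Q_0(s,t)=1$ and $Q_1(s,t)=(5s+9t-8)/3$) and explicitly offers no proof for general $k$. So there is no proof in the paper to compare yours against, and your submission is not a proof either: by your own account the induction does not close. Your outline is a faithful and plausible extension of the method of Theorems~\ref{theorem_theodore} and~\ref{theorem_thomas} (extract the coefficient of $x^{2s-k}e^{tx}$ from Lemma~\ref{lemma_theodore}, pin the convolution index $q$ to $a+c-1$ via the degree bounds of Proposition~\ref{proposition_ocean}, collapse the inner sums with the $(c\,\partial/\partial c)^m$ identities and Abel-type sums generalizing Lemma~\ref{lemma_Abel}), and your diagnosis of the obstruction is correct and worth recording: the convolution over $c+d=t$ with $c,d\geq 1$ necessarily reaches arguments such as $c=1$ where the factor $c_{k_1}(a,1)$ with $k_1\geq 2$ lies outside the range $t\geq k$ in which the conjectured closed form is asserted (and where its normalization $1/(t-k)!$ is meaningless), while these coefficients are genuinely nonzero, e.g.\ $[x^{0}e^{x}]f_1(x)=-2$.

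The concrete gap, then, is exactly the one you name: the inductive hypothesis does not cover all the terms the recursion produces, so the argument as written proves nothing beyond the already-known cases. None of the three escape routes you suggest (a separate closed form for the lowest exponential block $p_{j,1}(x)$, an aggregate cancellation argument, or exploiting the symmetry of Proposition~\ref{proposition_symmetric}) is carried out, and the degree count $\deg Q_k = k$ is likewise only sketched. To turn this into a theorem you would need, at minimum, a strengthened induction hypothesis that also describes $[x^{2s-k}e^{tx}]f_{s+t-1}(x)$ for $t<k$ (or at least for $t=1,\dots,k-1$) in a form stable under the recursion --- which is precisely the content the conjecture is missing.
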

As a special case we have already proved 
$Q_0(s,t) = 1$
and
$Q_1(s,t) = (5s+9t-8)/3$.

In~\cite{Keel} Keel showed the moduli space 
$\overline{M}_{0,n}$
is related to 
$n$-pointed rooted trees of one-dimensional projective spaces.
It is interesting to note that trees also emerge 
in our asymptotic
study of the Hilbert series of the cohomology ring.
See recent work of
Chen, Gibney and Krashen~\cite{Chen_Gibney_Krashen}
for further work on trees of higher-dimensional
projective spaces.

\section{Acknowledgements}

The author would like to thank 
the Institute for Advanced
Study where this paper was completed 
while the author was a Member of the
School of Mathematics during 2010--2011,
as well as
Robert~Corless and David Jeffrey for information
about the Lambert W function.

%
%
%

\newcommand{\journal}[6]{{\rm #1,} #2, {\rm #3} {\rm #4} (#5), #6.}
\newcommand{\book}[4]{{\rm #1,} #2, #3, #4.}
\newcommand{\bookf}[5]{{\rm #1,} #2, #3, #4, #5.}
\newcommand{\thesis}[4]{{\rm #1,} ``#2,'' Doctoral dissertation, #3, #4.}
\newcommand{\springer}[4]{{\rm #1,} ``#2,'' Lecture Notes in Math.,
                          Vol.\ #3, Springer-Verlag, Berlin, #4.}
\newcommand{\preprint}[3]{{\rm #1,} #2, preprint #3.}
\newcommand{\progress}[2]{{\rm #1,} #2, work in progress.}
\newcommand{\archive}[3]{{\rm #1,} #2, {\rm #3}.}
\newcommand{\unpublished}[1]{{\rm #1,} unpublished.}
\newcommand{\unpublisheddate}[2]{{\rm #1,} unpublished #2.}
\newcommand{\preparation}[2]{{\rm #1,} #2, in preparation.}
\newcommand{\appear}[3]{{\rm #1,} #2, to appear in {\rm #3}.}
\newcommand{\submitted}[4]{{\rm #1,} #2, submitted to {\rm #3}, #4.}
\newcommand{\AdvancesinMathematics}{Adv.\ Math.}
\newcommand{\DiscreteComputationalGeometry}{Discrete Comput.\ Geom.}
\newcommand{\DiscreteMath}{Discrete Math.}
\newcommand{\EuropeanJournalofCombinatorics}{European J.\ Combin.}
\newcommand{\JCTA}{J.\ Combin.\ Theory Ser.\ A}
\newcommand{\JCTB}{J.\ Combin.\ Theory Ser.\ B}
\newcommand{\JournalofAlgebraicCombinatorics}{J.\ Algebraic Combin.}
\newcommand{\communication}[1]{{\rm #1,} personal communication.}

\newcommand{\collection}[9]{{\rm #1,} #2, 
           in {\rm #3} (#4), #5,
           {\rm #6}, {\rm #7}, #8, #9.}

{\small

}

\vspace{.5in}

{\sc
 \noindent
  Margaret\ A.\ Readdy                          \\
  Department of Mathematics                     \\
  University of Kentucky                        \\
  Lexington, KY 40506-0027                      \\
  {\rm readdy@ms.uky.edu}
}

\end{document}